\newcommand{\alp}{\alpha}
\newcommand{\del}{\delta}
\newcommand{\gam}{\gamma}
\newcommand{\kap}{\varkappa}
\renewcommand{\phi}{\varphi}
\newcommand{\R}{{\mathbb R}}
\newcommand{\Z}{{\mathbb Z}}
\newcommand{\e}{{\rm e}}
\newcommand{\cC}{{\mathcal C}}
\newcommand{\cS}{{\mathcal S}}
\newcommand{\hA}{{\widehat A}}
\newcommand{\Zp}{\Z/p\Z}
\newcommand{\lpr}{\left(}
\newcommand{\rpr}{\right)}
\newcommand{\lfl}{\left\lfloor}
\newcommand{\rfl}{\right\rfloor}
\newcommand{\stm}{\setminus}
\newcommand{\seq}{\subseteq}
\newcommand{\est}{\varnothing}
\newcommand{\longc}{,\ldots,}
\newcommand{\longe}{=\dotsb=}
\newtheorem{lemma}{Lemma}
\newtheorem{claim}{Claim}
\newtheorem{theorem}{Theorem}
\newcommand{\refl}[1]{~\ref{l:#1}}
\newcommand{\reft}[1]{~\ref{t:#1}}
\newcommand{\refc}[1]{~\ref{c:#1}}
\newcommand{\refs}[1]{~\ref{s:#1}}
\newcommand{\refb}[1]{~\cite{b:#1}}
\newcommand{\refe}[1]{~\eqref{e:#1}}
\title{Refined bound for sum-free sets \break in groups of prime order}
\author{Jean-Marc Deshouillers}
\email{jean-marc.deshouillers@math.u-bordeaux1.fr}
\address{Bordeaux University \& CNRS, IMB UMR 5251, 33405 Talence, France.}
\author{Vsevolod F. Lev}
\email{seva@math.haifa.ac.il}
\address{Department of Mathematics, University of Haifa at Oranim,
  Tivon 36006, Israel.}
\subjclass[2000]{11P70 (primary), 11B75, 11T30 (secondary)}
\keywords{sum-free sets}
\begin{document}
\baselineskip=16pt

\begin{abstract}
Improving upon earlier results of Freiman and the present authors, we show
that if $p$ is a sufficiently large prime and $A$ is a sum-free subset of the
group of order $p$, such that $n:=|A|>0.318p$, then $A$ is contained in a
dilation of the interval $[n,p-n]\pmod p$.
\end{abstract}

\maketitle

\section{Introduction}

The subset $A$ of an additively written semigroup is called \emph{sum-free}
if there do not exist $a_1,a_2,a_3\in A$ with $a_1+a_2=a_3$; equivalently, if
$A$ is disjoint with its \emph{sumset} $A+A:=\{a_1+a_2\colon a_1,a_2\in A\}$.
Introduced by Schur in 1916 (``the set of positive integers cannot be
partitioned into finitely many sum-free subsets''), sum-free sets become now
a classical object of study in additive combinatorics; we refer the reader to
\cite{b:df,b:l2} and the papers, cited there, for the history and overview of
the subject area.

Let $G$ be a finite abelian group. It is easy to see that a randomly chosen
``small'' subset of $G$ is sum-free with high probability, while a randomly
chosen ``large'' subset of $G$ with high probability is \emph{not} sum-free.
Thus, small sum-free subsets of $G$ can be unstructured, whereas large
sum-free subsets possess a rigid structure. Unraveling this structure for
various underlying groups $G$ is a fascinating problem which received much
attention during the last decade.

In the present paper we consider groups of prime order $p$, which we identify
with the quotient group $\Zp$. Let $\phi_p$ denote the canonical homomorphism
from $\Z$ onto $\Zp$, and for a set $\cS\seq\Z$ let $\cS_p$ denote the image
of $\cS$ under $\phi_p$; here the letter $\cS$ will often be substituted by
the interval notation so that, for instance, $[3,6)_{11}=\{-8,4,16\}_{11}$
etc. The well-known Cauchy-Davenport inequality implies readily that if
$A\seq\Zp$ is sum-free, then $|A|\le\lfl (p+1)/3\rfl$. This estimate is
sharp, as for $u=\lfl(p+1)/3\rfl$ the set $[u,2u-1]_p$, and consequently its
dilates, are sum-free.

The main results of both \refb{l2} and \refb{df} show that in fact for prime
$p$, any large sum-free subset of $\Zp$ is close to a dilate of
$(p/3,2p/3)_p$. Specifically, it is proved in \refb{l2} for $\alp_0=0.33$,
and in \refb{df} for $\alp_0=0.324$ and $p$ large enough that if $A$ is a
sum-free subset of $\Zp$ with $n:=|A|>\alp_0 p$, then $A$ is contained in a
dilate of $[n,p-n]_p$. (As shown in \refb{l2}, the interval $[n,p-n]_p$ is
best possible in this context.)

For an integer $d$ and a subset $A$ of an abelian group let
 $d\ast A:=\{d a\colon a\in A\}$. The goal of the present paper is to prove
\begin{theorem}\label{t:main}
Let $p$ be sufficiently large a prime and suppose that $A\seq\Zp$ is
sum-free. If $n:=|A|>0.318 p$, then there exists $d\in\Z$ such that
 $A\seq d\ast[n,p-n]_p$.
\end{theorem}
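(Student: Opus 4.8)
The plan is to deduce from sum-freeness a strong structural restriction on $A$, first forcing $A$ into a short arc and then identifying the arc. \emph{Step 1: produce a large Fourier coefficient.} Since $A\cap(A+A)=\est$, the number of triples $a_1,a_2,a_3\in A$ with $a_1+a_2=a_3$ vanishes; writing this count through the Fourier transform $\hA$ over $\Zp$ gives $\sum_t|\hA(t)|^2\hA(t)=0$, so, separating the term $t=0$ (equal to $n^3$) and using Parseval, $\max_{t\ne0}|\hA(t)|\ge n^2/(p-n)>0.14\,p$ because $n>0.318p$. Pick $t_0\ne0$ attaining the maximum. Dilation preserves sum-freeness and merely permutes the collection $\{d\ast[n,p-n]_p\}$, so I may replace $A$ by $t_0\ast A$ and assume $|\hA(1)|\ge 0.14\,p$; equivalently there is $c\in\Zp$ with $\sum_{a\in A}\cos\bigl(2\pi(a-c)/p\bigr)\ge 0.14\,p$, so $A$ is concentrated near $c$ and $A+A$ near $2c$.

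\emph{Step 2: localize all of $A$ (the crux).} A single large Fourier coefficient does not by itself squeeze every element of $A$ into a short arc, so the idea is to couple it with the combinatorial constraints carried by sum-freeness: $A$ is disjoint from $A+A$ and from $2\ast A$, and Cauchy--Davenport gives $2n-1\le|A+A|\le p-n$, i.e.\ doubling at most $\approx 2.15$. Comparing the concentration of $A$ about $c$ against that of $A+A$ about $2c$, bounding the elements of $A$ lying far from $c$ by Kneser/Pollard-type inequalities, and iterating the resulting improvement, I would show that in fact \emph{all} of $A$ lies in an interval $I$ with $|I|<p/2$. This is the technical heart, and the place where the numerical value is decided: every inefficiency --- in the Fourier concentration, in controlling the stray elements, in the combinatorial steps --- costs in the admissible exponent, and $0.318$ is what survives after optimizing. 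I expect this to be the main obstacle.

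\emph{Step 3: rectify and place the interval.} With $A\seq I$, $|I|<p/2$, neither $I$ nor $I+I$ wraps around $\Zp$, so I lift $I$ isomorphically onto $\{0,1\longc |I|-1\}\seq\Z$; there $A$ is a genuine sum-free set with $|A+A|=|A+_{\Z}A|\le p-n\le 3n-4$ for $p$ large. Freiman's $3k-4$ theorem in $\Z$ --- which, unlike its modulo-$p$ counterpart, imposes no smallness condition on $|A|$ --- then confines $A$ to an arithmetic progression of length $\le|A+A|-n+1\le p-2n+1$; dilating once more, this progression becomes an interval $I'=[x,x+m]_p$ with $m\le p-2n$. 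Finally I use $A\cap(A+A)=\est$ and the elementary structure of sum-free subsets of an interval of integers to fix the position of $I'$: were $x$ too small or too large, sums of elements near one end of $I'$ would fall back into $I'$ onto points of $A$. This forces $I'\seq[n,p-n]_p$ (with slack when $m<p-2n$, with equality of the extreme points when $m=p-2n$), hence $A\seq[n,p-n]_p$; undoing the dilations gives $A\seq d\ast[n,p-n]_p$ for a suitable $d$. The $o(p)$ losses accumulated along the way are absorbed by the assumption that $p$ is large.
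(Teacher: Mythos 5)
Your Step 2 is not a proof but a statement of intent, and it is precisely where the entire content of the theorem lies. You write that by ``comparing the concentration of $A$ about $c$ against that of $A+A$ about $2c$, bounding the elements of $A$ lying far from $c$ by Kneser/Pollard-type inequalities, and iterating,'' you would force all of $A$ into an arc of length $<p/2$ --- but no such argument is supplied, and none of the named tools is known to deliver this conclusion at density $0.318$. Note that the target arc in the theorem has length $p-2n+1<0.364p$, so ``all of $A$ lies in a short arc'' is essentially equivalent to the theorem itself once the rectification of Step 3 is granted; you have deferred the whole difficulty into a paragraph of hopes. The paper's actual route is quite different and much more delicate: it first shows only that \emph{many} (at least $0.238p$) elements of $A$ lie in some half-circle $[u_0,u_0+p/2)_p$ (via a sharpened Fourier lower bound $|\hA(z_0)|>0.152p$ from Lemma~1 combined with Lemma~2), then controls the \emph{remaining} elements of $A$ --- which need not lie in any short arc a priori --- through difference-set lemmas (Lemmas~4, 5), the forbidden-zone Lemma~6, and a two-case combinatorial analysis ($m<0.244p$ versus $m>0.244p$). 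Nothing in your sketch substitutes for this.

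Two secondary problems would surface even if Step 2 were repaired. First, your Fourier bound $0.14p$ (the standard argument actually gives $n^2/(p-n)>0.148p$) is strictly weaker than the $0.152p$ the paper extracts via Lemma~1, and the constant $0.318$ is tuned to the stronger bound; with your constants the numerical inequalities closing the argument would not hold. Second, in Step 3 the lifted set $A\seq[u,u+L]\seq\Z$ with $L<p/2$ need \emph{not} be sum-free over $\Z$ in the sense required: $a_1+a_2\equiv a_3\pmod p$ permits $a_1+a_2-a_3=\pm p$, so sum-freeness in $\Zp$ does not transfer to the lift, although the equality $|A+_{\Z}A|=|A+A|$ you use does hold; and your final positioning argument (``were $x$ too small or too large\dots'') needs to be made precise along the lines of the paper's Lemma~7, which requires knowing that $A$ avoids a specific neighbourhood of $0$, not merely that $A$ sits in some short interval. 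You should either supply a genuine argument for Step 2 or recognize that the theorem cannot be obtained by a soft concentration-plus-rectification scheme.
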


The seemingly modest improvement of the constant from $0.324$ to $0.318$
requires a substantial effort and a number of new ideas,  some at the level
of Fourier analysis and others of a combinatorial nature; we believe that
these ideas may actually be of more general interest than the improvement of
the constant itself.

An example, presented in \refb{l3}, shows that the constant in question
cannot be reduced to below $0.2$. Though the value $0.318$ is not the precise
limit of our method, narrowing significantly the gap between $0.2$ and
$0.318$ seems to be a rather non-trivial and exciting problem.

\section{Some lemmas}\label{s:lemmas}

We gather here several auxiliary results, used in the next section to prove
Theorem \reft{main}.

It is well-known that if a set is sum-free, then its characteristic function
has a large Fourier coefficient. Specifically, let $\e_p$ denote the
character of the group $\Zp$, defined by $\e_p(1):=\exp(2\pi i/p)$, and given
a set $A\seq\Zp$ and an integer $z$ write $\hA(z):=\sum_{a\in A} \e_p(az)$. A
standard argument shows that if $A\seq\Zp$ is sum-free with $|A|>\alp_0p$,
then there exists $z\in\Z$ with $\phi_p(z)\neq 0$ such that
$|\hA(z)|>\frac{\alp_0^2}{1-\alp_0}\,p$. For $\alp_0=0.318$ this leads to
$|\hA(z)|>0.148p$, while the following lemma allows us to get
$|\hA(z)|>0.152p$.
\begin{lemma}\label{l:intrick}
Let $\kap>0$ and $\gam\in(0,1)$ be real numbers, and write
 $K:=\lfl \gam^{-1/\kap}\rfl$. Suppose that $P>K$ is an integer and
$v_1\longc v_P$ are non-negative real numbers, satisfying
  $$ \textstyle
     \sum_{i=1}^P v_i = 1 \quad\text{and}\quad
                          \sum_{i=1}^P v_i^{1+\kap} \ge \gam. $$
Then the equation $Kx^{1+\kap}+(1-Kx)^{1+\kap}=\gam$ in the variable $x$ has
exactly one solution in the interval $(1/(K+1),1/K]$, and denoting this
solution by $X$ we have
  $$ \max \{ v_1\longc v_P \} \ge X. $$
\end{lemma}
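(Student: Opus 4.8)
The plan is to study, for each positive integer $k$, the auxiliary function $F_k(x):=kx^{1+\kap}+(1-kx)^{1+\kap}$ on the segment $[1/(k+1),1/k]$, and to reduce the lemma to monotonicity of $F_k$ together with an extremal estimate for $\sum v_i^{1+\kap}$. First I would record that $F_k'(x)=k(1+\kap)\lpr x^\kap-(1-kx)^\kap\rpr$, which on $[1/(k+1),1/k]$ (where $1-kx\ge 0$) is non-negative, and strictly positive for $x>1/(k+1)$; hence $F_k$ is continuous and strictly increasing there, with $F_k\lpr 1/(k+1)\rpr=(k+1)^{-\kap}$ and $F_k(1/k)=k^{-\kap}$. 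Since $K=\lfl\gam^{-1/\kap}\rfl$ is equivalent to $K\le\gam^{-1/\kap}<K+1$, i.e.\ (raising to the power $\kap$) to $(K+1)^{-\kap}<\gam\le K^{-\kap}$, applying the intermediate value theorem to the increasing function $F_K$ on $[1/(K+1),1/K]$ yields a unique point $X\in(1/(K+1),1/K]$ with $F_K(X)=\gam$; this is the first assertion.

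For the second assertion the crucial point is the claim that if $u_1\longc u_P$ are non-negative with $\sum u_i=1$ and $\max u_i=t$, then
  $$ \textstyle\sum_{i=1}^P u_i^{1+\kap}\le \lfl 1/t\rfl\,t^{1+\kap}+\lpr 1-\lfl 1/t\rfl\,t\rpr^{1+\kap}=:\Phi(t). $$
This follows from convexity of $x\mapsto x^{1+\kap}$ on $[0,\infty)$: the left-hand sum is maximized by the "extremal" tuple consisting of $\lfl 1/t\rfl$ coordinates equal to $t$, one coordinate equal to the remainder $1-\lfl 1/t\rfl\,t\in[0,t)$, and the rest equal to $0$. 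A clean way to see this is via majorization: sorting any competitor $u$ in decreasing order, its partial sums are bounded by $\min(jt,1)$, which are exactly the partial sums of the extremal tuple, so the extremal tuple majorizes $u$ and Karamata's inequality applies. Since $\max v_i\ge 1/P$ we have $\lfl 1/t\rfl\le P$, so the extremal tuple fits into $P$ coordinates (with zero remainder when $\lfl 1/t\rfl=P$).

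Applying the claim with $u_i=v_i$ and $t=m:=\max\{v_1\longc v_P\}$ gives $\gam\le\sum v_i^{1+\kap}\le\Phi(m)$, and it remains to deduce $m\ge X$. If $m>1/K$ then $m>1/K\ge X$ and we are done. If $m\le 1/(K+1)$, then $m\in(1/(j+1),1/j]$ for some integer $j\ge K+1$, whence $\Phi(m)=F_j(m)\le F_j(1/j)=j^{-\kap}\le(K+1)^{-\kap}<\gam$, contradicting $\gam\le\Phi(m)$; so this case cannot occur. Finally, if $m\in(1/(K+1),1/K]$ then $\Phi(m)=F_K(m)$, so $F_K(m)\ge\gam=F_K(X)$, and strict monotonicity of $F_K$ on $[1/(K+1),1/K]$ forces $m\ge X$.

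I expect the genuine obstacle to be the extremal claim of the second paragraph: identifying the configuration that maximizes $\sum v_i^{1+\kap}$ under the two normalizations is where the content lies, and it must be handled by a convexity (equivalently, a compression or majorization) argument rather than by the naive bound $\sum v_i^{1+\kap}\le m^\kap\sum v_i=m^\kap$, which only yields $m\ge\gam^{1/\kap}$ and is too weak. The calculus with $F_k$ and the final case distinction are then routine.
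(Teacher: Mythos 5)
Your proof is correct, and it takes a genuinely different route from the paper's. The paper establishes the extremal statement variationally: it introduces the compact set $W$ of admissible vectors, minimizes $\max\{w_1\longc w_P\}$ over $W$, and uses small mass-transfer perturbations $T_{ij}^{(\del)}$ to show that any minimizer must have exactly $K$ coordinates equal to the minimum value $\mu$, one coordinate equal to $1-K\mu$, and the rest zero, whence $K\mu^{1+\kap}+(1-K\mu)^{1+\kap}=\gam$ and $\mu=X$. You instead prove, for an arbitrary admissible vector, the direct bound $\sum_{i=1}^P v_i^{1+\kap}\le\Phi(t)$ with $t=\max v_i$, by majorizing with the tuple $(t\longc t,\,1-\lfloor 1/t\rfloor t,\,0\longc 0)$ and invoking Karamata's inequality, and then you read off $t\ge X$ from the monotonicity of $F_j$ on each interval $(1/(j+1),1/j]$. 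The two arguments isolate the same extremal configuration, but yours replaces the compactness-plus-perturbation machinery by a single convexity inequality; the majorization step is sound (the competitor's $j$-th partial sum is at most $\min(jt,1)$, which is exactly the $j$-th partial sum of the extremal tuple), and your closing case analysis is complete, including the correct observation that $\lfloor 1/m\rfloor=K$ precisely on $(1/(K+1),1/K]$. Two minor remarks: your argument never actually needs the hypothesis $P>K$ (if $P\le K$ the conclusion is immediate from $\max v_i\ge 1/P\ge 1/K\ge X$), whereas the paper uses it to exclude the all-equal minimizer; and, like the paper's proof, yours shows the bound is sharp, since the extremal tuple with $t=X$ is admissible when $P\ge K+1$.
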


\begin{proof}
The existence and uniqueness of a solution is an immediate consequence of the
intermediate value property: just notice that the function
$f(x):=Kx^{1+\kap}+(1-Kx)^{1+\kap}$ is continuous and increasing on
$[1/(K+1),1/K]$, and that
  $$ f(1/(K+1)) = \frac1{(K+1)^\kap} < \gam \le \frac1{K^\kap} = f(1/K). $$
We now prove the second assertion. Let $W$ denote the set of all those real
vectors $(w_1\longc w_P)\in\R^P$ with non-negative coordinates, satisfying
  $$ \textstyle \sum_{i=1}^P w_i = 1 \quad\text{and}\quad
                                      \sum_{i=1}^P w_i^{1+\kap} \ge \gam $$
(so that $(v_1\longc v_P)\in W$). Observing that $W$ is compact and
$\max\{w_1\longc w_P\}$ is a continuous function on $W$, set
  $$ \mu := \min \{ \max\{w_1\longc w_P\}
                                      \colon (w_1\longc w_P) \in W \} $$
and
  $$ \cC := \{ (w_1\longc w_P) \in W
                                  \colon \max\{w_1\longc w_P\} = \mu \}. $$

We notice that if $(w_1\longc w_P)\in\cC$, then
  $$ \gam \le \sum_{i=1}^P w_i^{1+\kap}
                           \le \mu^{\kap} \sum_{i=1}^P w_i = \mu^{\kap}, $$
whence
\begin{equation}\label{e:floormu}
  \frac1\mu \le \gam^{-1/\kap} < K+1.
\end{equation}
On the other hand, it is readily verified that if $w_i=1/K$ for $i\in[1,K]$
and $w_i=0$ for $i\in[K+1,P]$, then $(w_1\longc w_P)\in W$, and hence
 $\mu \le 1/K$, implying $1/\mu\ge K$. Comparing this with \refe{floormu} we
derive that

\begin{equation}\label{e:floor}
  \lfl 1/\mu \rfl = K.
\end{equation}

For $\del\in\R$ and $i,j\in[1,P]$ with $i\neq j$, define the operator
$T_{ij}^{(\del)}\colon\R^P\to\R^P$ by
 $T_{ij}^{(\del)}(w_1\longc w_P)=(w_1'\longc w_P')$, where
$w_i'=w_i+\del,\ w_j':=w_j-\del$, and $w_k'=w_k$ for $k\in[1,P]\stm\{i,j\}$.
Observe that if $w_i<w_j$ and $0<\del<(w_j-w_i)/2$, then $\sum_{i=1}^P
(w_i')^{1+\kap}<\sum_{i=1}^P w_i^{1+\kap}$.

Note, that if $(w_1\longc w_P)\in\cC$, then not all coordinates $w_i$ are
equal to each other: else they all would be equal to $1/P$, implying
 $\gam\le P\cdot(1/P)^{1+\kap}=P^{-\kap}$ and hence contradicting
$P\ge K+1>\gam^{-1/\kap}$.

We claim now that if $(w_1\longc w_P)\in\cC$, then equality holds in
$\sum_{i=1}^P w_i^{1+\kap}\ge\gam$. Indeed, assuming that
 $\sum_{i=1}^P w_i^{1+\kap}>\gam$, find $i,j\in[1,P]$ with $w_i<w_j=\mu$ and
apply to $(w_1\longc w_P)$ the transformation $T_{ij}^{(\del)}$ with
$\del\in(0,(w_j-w_i)/2)$ small enough to ensure that the resulting vector
$(w_1'\longc w_P')$ satisfies $\sum_{i=1}^P (w_i')^{1+\kap}>\gam$. Repeating
this procedure sufficiently many times, we find eventually a vector
$(u_1\longc u_P)\in W$ with $\max\{u_1\longc u_P\}<\mu$, contradicting the
definition of $\mu$.

Next, we observe that for any $(w_1\longc w_P)\in\cC$ there is at most one
index $i\in[1,P]$ such that $0<w_i<\mu$. For if $0<w_i\le w_j<\mu$, where
$i,j\in[1,P]$ are distinct, then, applying to $(w_1\longc w_P)$ the
transformation $T_{ij}^{(\del)}$ with $\del$ negative and sufficiently small
in absolute value, we obtain a vector $(w_1'\longc w_P')\in\cC$ with
$\sum_{i=1}^P (w_i')^{1+\kap}>\gam$; however, we showed above that this is
impossible.

Fix $(w_1\longc w_P)\in\cC$. As it follows from our last observation, there
is an integer $k\in[1,P-1]$ such that, re-ordering the coordinates of
$(w_1\longc w_P)$, if necessary, we can write
  $$ \mu = w_1\longe w_k > w_{k+1} \ge 0 = w_{k+2}\longe w_P. $$
From
  $$ k\mu \le \sum_{i=1}^P w_i < (k+1)\mu $$
it follows then that $k\le 1/\mu<k+1$, whence $k=\lfl 1/\mu\rfl=K$ by
\refe{floor}, and consequently $w_{k+1}=1-K\mu$. This yields
  $$ K\mu^{1+\kap}+(1-K\mu)^{1+\kap}=\gam, $$
so that in fact $\mu=X$, implying the second assertion of the lemma and
indeed, showing that the estimate of the lemma is sharp.
\end{proof}

As indicated at the beginning of this section, Lemma \refl{intrick} will be
used to show that if $A\seq\Zp$ is sum-free with $|A|>0.318p$, then there
exists $z\in\Z$ with $\phi_p(z)\neq 0$ and such that $|\hA(z)|>0.152p$. A
well-known result of Freiman leads then to the conclusion that there is an
interval of the form $[u,u+p/2)_p$, with an integer $u$, containing at least
$(|A|+|\hA(z)|)/2>0.235p$ elements of the dilation $z\ast A$. Our next lemma,
which is a reformulation of \cite[Corollary 2]{b:l1}, allows us to improve
this to $0.238p$.
\begin{lemma}[\protect{\cite[Corollary 2]{b:l1}}]\label{l:frep}
Let $p$ be a positive integer and suppose that $A\seq\Zp$. If $n=|A|$ and
$S=\sum_{a\in A} \e_p(a)$, then there exists an integer $u$ such that
  $$ |A\cap[u,u+p/2)_p| \ge \frac{n}2\,
             + \frac{p}{2\pi}\,\arcsin\Big( |S|\sin \frac\pi{p} \Big). $$
\end{lemma}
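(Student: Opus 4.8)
The plan is to reduce the problem to a one-variable optimization over the distribution of the points of $A$ on the circle $\Zp$, exploiting the fact that the quantity to be bounded is invariant under rotation. First I would observe that for any integer $u$, writing $N(u):=|A\cap[u,u+p/2)_p|$, the function $u\mapsto N(u)$ takes integer values, changes by $\pm 1$ as $u$ increments by $1$, and satisfies $N(u)+N(u+\lceil p/2\rceil)$ close to $n$; averaging $N(u)$ over a suitable range of $u$ will produce $n/2$ as the main term, so the content of the lemma is the size of the fluctuation, controlled by $|S|$.

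The key step is to pass to a Fourier/averaging identity. I would express the indicator of the half-circle interval in terms of the characters $\e_p(kx)$ and compute a weighted average $\sum_u c_u N(u)$ for cleverly chosen nonnegative weights $c_u$; since only the $k=0$ and $k=\pm 1$ Fourier modes of $A$ and of the interval indicator survive if the weights are chosen to be themselves supported on low frequencies, the average becomes an explicit expression in $n$ and $S$ (and the phase of $S$). Concretely, one takes the weights to pick out a cosine profile so that $\sum_u c_u N(u) = \frac n2 + (\text{const})\cdot|S|$, where the constant involves $\sin(\pi/p)$ coming from the Dirichlet-kernel evaluation of the length-$\lceil p/2\rceil$ interval at frequency $1$; then since $N(u)$ is integer-valued, one rounds using the concavity of $\arcsin$ — here is where the $\arcsin\big(|S|\sin(\pi/p)\big)$ shape appears, rather than the naive linear bound $|S|\cdot\frac{2}{p}\cdot\text{something}$. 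Alternatively, and perhaps more cleanly, I would follow the route of \cite[Corollary 2]{b:l1} directly: consider the continuous family of arcs and use the fact that $\max_u N(u)$ dominates the average of $N$ against \emph{any} probability measure on $u$; optimizing the measure (which turns out to be an arcsine-type measure, explaining the inverse-sine) yields exactly the stated bound.

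The main obstacle I anticipate is getting the constant sharp — i.e. producing the precise factor $\frac{p}{2\pi}\arcsin(|S|\sin(\pi/p))$ rather than a weaker linear-in-$|S|$ estimate. A crude second-moment or single-character argument gives something of the form $\max_u N(u)\ge n/2+c|S|/p$ but with a suboptimal $c$; extracting the optimal constant requires either the exact extremal-measure computation or a careful telescoping/rearrangement argument that tracks the discreteness of $N(u)$ precisely. Since the lemma is quoted as a known result (\cite[Corollary 2]{b:l1}), in the paper itself one may simply cite it; but to reconstruct the proof, the delicate point is this optimization, and I would handle it by writing $|S|=\sum_{a\in A}\e_p(a)$, rotating so $S\ge 0$, and then recognizing $\sum_a \cos(2\pi a/p)$-type sums as Riemann sums that are extremized exactly by the arcsine distribution — a standard but nontrivial computation. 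Once the constant is in hand, the passage from the weighted average to $\max_u N(u)$ is immediate, completing the proof.
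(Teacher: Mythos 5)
The paper offers no proof of this lemma at all --- it is quoted verbatim as \cite[Corollary~2]{b:l1} --- so the only question is whether your reconstruction would actually work, and I do not think it does as written. Your primary route (nonnegative weights $c_u$ supported on frequencies $0,\pm1$, i.e.\ $c_u=\frac1p(1+2\lambda\cos(2\pi(u-u_0)/p))$ with $|\lambda|\le\frac12$ forced by nonnegativity) provably cannot reach the stated constant: it gives $\max_u N(u)\ge \frac n2+\frac{|S|}{2\pi}+O(1)$, whereas the lemma asserts at least $\frac n2+\frac{p}{2\pi}|S|\sin\frac\pi p\approx\frac n2+\frac{|S|}2$, a loss of a factor $\pi$. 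You correctly sense this ("a suboptimal $c$"), but the fallback you offer --- an "arcsine-type extremal measure" and "Riemann sums extremized by the arcsine distribution" --- is not the right mechanism and is never substantiated; nor is "rounding using the concavity of $\arcsin$" (note $\arcsin$ is convex on $[0,1]$, and convexity/concavity plays no role here). The $\arcsin$ in the bound has a different origin: the lemma is the inversion of the inequality $|S|\sin\frac\pi p\le\sin\bigl(\pi(2m-n)/p\bigr)$, where $m=\max_u|A\cap[u,u+p/2)_p|$ --- exactly the form in which the paper actually uses it in the proof of Claim~3.

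The argument that does work (and is what the cited source does) is a discrete rearrangement/compression, not an averaging: regard $A$ as $n$ points among the $p$-th roots of unity subject to the constraint that every half-circle arc contains at most $m$ of them, and show that $|S|$ is maximized, under this constraint, by the configuration consisting of $2m-n$ consecutive roots of unity together with $n-m$ antipodal, mutually cancelling pairs; summing the geometric series gives $|S|\le\sin(\pi(2m-n)/p)/\sin(\pi/p)$, which rearranges to the claim. The compression step --- moving a point toward the direction of $S$ increases $|S|$ while preserving the arc constraint until the extremal configuration is reached --- is the genuine content of the lemma, and it is the piece missing from your proposal. Since the paper itself only cites the result, simply citing \cite[Corollary~2]{b:l1} (as you note one may) would be acceptable; but as a reconstruction the proof has a real gap at precisely the point you flagged as delicate.
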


For a subset $A$ of an additively written abelian group write
  $$ A-A := \{ a_1-a_2 \colon a_1,a_2\in A \}. $$

The following lemma follows readily from the results of \refb{gaf}; see also
\cite[Theorem~2]{b:ls}.

\begin{lemma}\label{l:3n-3}
Let $\ell$ and $m$ be positive integers and suppose that $A\seq[0,\ell]$ is a
set of integers such that $|A|=m,\ 0\in A,\ \ell\in A$, and $\gcd(A)=1$. Then
  $$ |A-A| \ge \min \{ \ell + m, 3m - 3 \}. $$
\end{lemma}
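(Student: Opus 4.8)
The plan is to derive the estimate from the classical Freiman-type structure theorem for sets with a small difference set, by a short argument by contradiction. Suppose then that $|A-A| < \min\{\ell+m,\,3m-3\}$, so that $|A-A| \le \ell+m-1$ and $|A-A| \le 3m-4$ hold simultaneously. (The hypotheses force $m \ge 2$ and $\ell \ge 1$, since $A = \{0\}$ would give $\gcd(A) = 0 \ne 1$.)

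The inequality $|A-A| \le 3m-4$ brings $A$ within the scope of the difference-set analogue of Freiman's $3k-4$ theorem (see~\refb{gaf}; cf.\ \cite[Theorem~2]{b:ls}): a set of $m$ integers whose difference set has at most $3m-4$ elements is contained in an arithmetic progression with at most $|A-A|-m+1$ terms. By the first inequality, such a progression has at most $\ell$ terms. Since $0$ lies in it, every term of the progression, and in particular every element of $A$, is a multiple of its common difference $d$; as $\gcd(A) = 1$ this forces $d = 1$, so the progression is a block of consecutive integers. But it contains both $0$ and $\ell$, hence has at least $\ell+1$ terms — contradicting the bound of $\ell$ terms just obtained, and proving the lemma. (Alternatively, one can bypass the difference-set formulation: writing $B := \ell - A$ one has $A - A = (A+B)-\ell$, so $|A-A| = |A+B|$, where $B$ likewise lies in $[0,\ell]$, contains $0$ and $\ell$, and has $\gcd$ equal to $1$; the two-set sumset bound of \cite[Theorem~2]{b:ls} then applies directly.)

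I do not anticipate a real obstacle: the only points needing care are invoking the structure theorem in the correct quantitative form — with the length of the progression bounded by $|A-A|-|A|+1$ — and verifying that the few small or degenerate configurations (essentially only $m=2$, where $A=\{0,1\}$, $\ell=1$, and $|A-A|=3=\min\{\ell+m,\,3m-3\}$) are consistent with, not counterexamples to, the asserted inequality.
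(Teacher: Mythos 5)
Your argument is correct, and it rests on exactly the sources the paper itself invokes for this lemma (which is stated there without proof, as a consequence of Freiman\refb{gaf} and of \cite[Theorem~2]{b:ls}): in particular, your parenthetical remark --- setting $B:=\ell-A$ so that $|A-A|=|A+B|$ and applying the two-set sumset bound directly --- is precisely the paper's intended derivation. Your primary route through the difference-set form of the $3k-4$ theorem is also sound, provided that theorem is available in the quantitative form you quote (a progression of at most $|A-A|-m+1$ terms), which is itself customarily deduced from the very same two-set bound, so the detour through the structure theorem adds nothing but is not wrong.
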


The next two lemmas deal with the structure of the difference set $A-A$ in
the case where $A$ is a dense set of integers.

\begin{lemma}[\protect{\cite[Lemma 2]{b:l2}}]\label{l:dif2}
Let $m$ and $\ell$ be positive integers, satisfying $\ell\le 2m-2$, and
suppose that $A\seq[0,\ell]$ is a set of integers such that $|A|=m$. Then for
any integer $k\ge 1$ we have
  $$ \left( \frac{\ell-m+1}k,\,\frac{m}k \right) \seq A-A. $$
\end{lemma}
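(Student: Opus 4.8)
The claim is that for $A\seq[0,\ell]$ with $|A|=m$ and $\ell\le 2m-2$, the interval $\big(\tfrac{\ell-m+1}{k},\tfrac{m}{k}\big)$ lies in $A-A$ for every $k\ge1$. The case $k=1$ is the heart of the matter; the general case will follow by a scaling/covering argument, so I treat $k=1$ first.

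\emph{The case $k=1$.} Let $d$ be an integer with $\ell-m+1<d<m$; I must produce $a_1,a_2\in A$ with $a_1-a_2=d$. The natural device is to consider the translate $A$ and $A+d$ (equivalently $A$ and $A\cap(A+d)$) inside an interval of controlled length. Since $A\seq[0,\ell]$, the set $A+d\seq[d,\ell+d]$, so both $A$ and $A+d$ are contained in $[0,\ell+d]$, an interval of $\ell+d+1$ integers. If $A\cap(A+d)=\est$, then $|A|+|A+d|\le\ell+d+1$, i.e. $2m\le\ell+d+1$, forcing $d\ge 2m-\ell-1>\ell-m+1\ge\ldots$ — wait, this gives $d\ge 2m-1-\ell$, and since $\ell\le 2m-2$ we get $2m-1-\ell\ge 1$, so this only rules out small $d$. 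To rule out $d<m$ I instead intersect on the \emph{other} side: $A$ and $A+d$ both meet $[d,\ell]$ in the "overlap zone". More precisely, count $|A\cap[0,d-1]|$ and $|A\cap[\ell-d+1,\ell]|$: if $d<m$ then $|A\cap[d,\ell]|\ge m-d>0$ and $|(A+d)\cap[d,\ell]|=|A\cap[0,\ell-d]|\ge m-d$ — no, I need these to be forced to intersect. The clean inclusion–exclusion is: $A\cap(A+d)$ sits inside $[d,\ell]$, which has $\ell-d+1$ elements, while $|A\cap[d,\ell+d]|+|(A+d)\cap[d,\ell+d]|\ge m+(m-?)$. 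Let me fix the bookkeeping: $|A\cap(A+d)|\ge |A\cap[d,\ell]|+|(A+d)\cap[d,\ell]|-(\ell-d+1)$. We have $|A\cap[d,\ell]|=m-|A\cap[0,d-1]|\ge m-d$ and $|(A+d)\cap[d,\ell]|=|A\cap[0,\ell-d]|=m-|A\cap[\ell-d+1,\ell]|\ge m-(d-1)$ (using $\ell-(\ell-d+1)+1=d$, so at most $d$ points removed — actually exactly the interval $[\ell-d+1,\ell]$ has $d$ points, hmm, let me not over-optimize here). Putting these together, $|A\cap(A+d)|\ge (m-d)+(m-d)-(\ell-d+1)=2m-d-\ell-1$, which is positive precisely when $d<2m-\ell-1$; combined with the complementary estimate this should cover the whole range $\ell-m+1<d<m$ once $\ell\le 2m-2$ is used to stitch the two regimes. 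So the $k=1$ case is an elementary double-counting, and the constraint $\ell\le 2m-2$ is exactly what makes the two ranges overlap.

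\emph{The case $k\ge2$.} Here the idea is to reduce to $k=1$ by a pigeonhole on residues, or more simply by observing that a long interval in $A-A$ can be shrunk by a factor $k$: if $I\seq A-A$ is an interval of length $\ge k$, then... no — the correct route is the reverse. Given $d$ with $\tfrac{\ell-m+1}{k}<d<\tfrac{m}{k}$, I want to realize $d$ as a difference. Consider not $A$ but the "contracted" picture: partition $[0,\ell]$ into $k$ arithmetic progressions of common difference $k$. Some progression $P_j=\{j,j+k,\ldots\}$ contains at least $\lceil m/k\rceil$ elements of $A$, spanning a sub-interval of at most $\lceil\ell/k\rceil+1$ terms; rescaling $P_j$ to consecutive integers converts this into a set $A'\seq[0,\ell']$ with $m'\ge m/k$, $\ell'\le \ell/k$, satisfying $\ell'\le 2m'-2$, and a difference $d$ in $A'-A'$ (as consecutive integers) corresponds to $kd\in A-A$ — but I want $d$ itself in $A-A$, not $kd$. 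So the contraction must be applied differently: I should realize $kd$ won't help. Instead, the right move is to apply the $k=1$ result to $A$ directly but note that $\big(\tfrac{\ell-m+1}{k},\tfrac{m}{k}\big)\seq\big(\ell-m+1,\,m\big)$ is false in general (it's a sub-interval only when... it always is, since $\tfrac{\ell-m+1}{k}\ge ?$) — actually $\tfrac{\ell-m+1}{k}\le \ell-m+1$ and $\tfrac{m}{k}\le m$, so the $k$-interval is \emph{not} contained in the $1$-interval; rather it extends further left. So $k\ge2$ genuinely gives new information near $0$, which must come from the contraction argument: write each $d$ in the target range, find a dilate. The honest plan: by the contraction to the densest residue class mod $k$, get $A'\seq[0,\ell']$ with $\ell'\le\lfloor\ell/k\rfloor$, $m'=|A'|\ge\lceil m/k\rceil$, hence $\ell'\le 2m'-2$, apply the $k=1$ case to $A'$ to get $(\ell'-m'+1,m')\seq A'-A'$, and check that $\big(\tfrac{\ell-m+1}{k},\tfrac{m}{k}\big)\seq(\ell'-m'+1,m')$ via the inequalities $\ell'-m'+1\le\tfrac{\ell-m+1}{k}$ (wait: $\ell'\le\ell/k$ and $m'\ge m/k$ give $\ell'-m'\le(\ell-m)/k$, good) and $m'\ge m/k$ (good); then a difference $d\in A'-A'$ in the contracted coordinates lifts to $d\in A-A$ because differences within one residue class mod $k$, when that class is rescaled by dividing by $k$ and shifting, satisfy $a_i-a_j = k(a_i'-a_j')$ — so again I land on $kd$, not $d$. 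The resolution must be that the contraction is by a \emph{shift only}, not a dilation: take the densest class, but keep the ambient scale — i.e. work with $A$ as is, and use that $A-A$ contains all small differences directly from the $k=1$ estimate applied after removing the gcd.

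\emph{Main obstacle.} The genuine difficulty is the $k\ge2$ case and getting the dilation bookkeeping right so that one recovers $d$ rather than $kd$; I expect the paper resolves this by the clean observation that if $d<m/k$ then among the $k$ translates $A,A+d,A+2d,\ldots,A+(k-1)d$, all contained in $[0,\ell+(k-1)d]\seq[0,\ell+m-d]$, two must intersect by pigeonhole (total size $km$ versus length $\ell+(k-1)d+1<\ell+m$, and $km>\ell+m-1$ iff $(k-1)m>\ell-1$, which holds since $m>\ell/k\cdot$... needs $\ell\le 2m-2$ and $k\ge2$), and $(A+id)\cap(A+jd)\ne\est$ with $i<j$ gives $(j-i)d\in A-A$; but since $d>(\ell-m+1)/k$ one shows $(j-i)d$ lies in the range where the $k=1$ result already applies, or iterates. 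So the proof is: (i) elementary double-counting for $k=1$; (ii) a pigeonhole on the $k$ translates $A+id$ for the general case, reducing to (i); (iii) routine verification that the interval endpoints match. The obstacle is purely in organizing (ii) so the conclusion is about $d$ and not a multiple of it — plausibly handled by choosing $j-i$ minimal or by a descent on $k$.
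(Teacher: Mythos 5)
There are two genuine gaps. First, in the case $k=1$ your two counting arguments are really the same estimate and both give exactly the threshold $d<2m-\ell-1$: disjointness of $A$ and $A+d$ inside $[0,\ell+d]$ gives $2m\le\ell+d+1$, and your inclusion--exclusion on the overlap zone $[d,\ell]$ gives $|A\cap(A+d)|\ge 2m-d-\ell-1$, positive under the same condition. So there is no ``complementary estimate'' covering the upper part of the range, and the claimed stitching does not occur: the target interval $(\ell-m+1,m)$ reaches up to $d=m-1$, while $2m-\ell-2<m-1$ whenever $\ell>m-1$. Even adding the natural second argument for large $d$ (when $2d>\ell$ the pairs $\{x,x+d\}$, $x\in[0,\ell-d]$, are pairwise disjoint and together with the uncovered middle $(\ell-d,d)$ force a pair inside $A$ once $m>d$) leaves a hole: for $\ell=14$, $m=10$, $d=6$ one has $d\ge 2m-\ell-1=5$ and $2d\le\ell$, yet the lemma asserts $6\in A-A$, which is true. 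The argument that works is to split $[0,\ell]$ into residue classes modulo $d$: if $d\notin A-A$ then $A$ meets the class $P_r$ (a path with $N_r$ vertices under the shift by $d$) in at most $\lceil N_r/2\rceil$ elements, and one must evaluate $\sum_r\lceil N_r/2\rceil$ exactly (distinguishing parities) rather than bound it by $(\ell+1+d)/2$, which is what your estimates amount to. Writing $\ell+1=qd+s$ with $0\le s<d$, the exact count shows that if $d\notin A-A$ then either $q=2j$, $jd\le\ell-m+1$ and $(j+1)d>m$, or $q=2j-1$, $jd\ge m$ and $(j-1)d\le\ell-m+1$.

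Second, the case $k\ge2$ is not proved at all: you correctly observe that each contraction or pigeonhole scheme you try produces $kd$ or $(j-i)d$ in $A-A$ rather than $d$, and you leave this as an unresolved obstacle. The resolution is that no reduction to $k=1$ is needed: the parity dichotomy just described shows directly that if $d\notin A-A$ then the entire progression $d,2d,3d,\dots$ jumps over the open interval $(\ell-m+1,m)$, i.e., $kd\le\ell-m+1$ or $kd\ge m$ for every $k\ge1$, which is precisely the contrapositive of the lemma for all $k$ simultaneously. (Note that the paper does not prove this lemma but imports it from \cite{b:l2}, so the comparison here is with the argument that statement actually requires.)
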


\begin{lemma}[\protect{\cite[Lemma 3]{b:l2}}]\label{l:dif3}
Let $m$ and $\ell$ be positive integers and suppose that $A\seq[0,\ell]$ is a
set of integers such that $|A|=m$. If $\ell<\frac{2k-1}k\,m-1$ with an
integer
 $k\ge 2$, then
   $$ \left(-\frac m{k-1},\frac m{k-1}\right) \seq A-A. $$
\end{lemma}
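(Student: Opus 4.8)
The plan is to argue by contradiction. Since $A-A$ is symmetric and $0\in A-A$, it suffices to show that every integer $d$ with $1\le d<m/(k-1)$ lies in $A-A$; so fix such a $d$ and suppose instead that $d\notin A-A$, i.e., $A\cap(A+d)=\est$. A first, crude observation is that $A$ and $A+d$ are then disjoint subsets of $[0,\ell+d]$, so $2m\le\ell+d+1$. Rewriting the hypothesis as $\ell+1<\frac{2k-1}{k}\,m$, this gives $d>m/k$, hence $m<kd$; combined with the standing inequality $d<m/(k-1)$ (equivalently $m>(k-1)d$) it confines $d$ to the narrow window $m/k<d<m/(k-1)$.

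Next I would pass to residue classes modulo $d$. Write $\ell+1=qd+s$ with $0\le s<d$. Then exactly $s$ of the $d$ residue classes meet $[0,\ell]$ in $q+1$ consecutive terms of an arithmetic progression of common difference $d$, and the other $d-s$ classes meet it in $q$ such terms. The condition $A\cap(A+d)=\est$ says precisely that $A$ contains no two consecutive terms of any of these progressions, so $A$ occupies at most $\lceil t/2\rceil$ places of a progression of length $t$; summing over the classes, $m\le s\lceil(q+1)/2\rceil+(d-s)\lceil q/2\rceil$.

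It remains to pin down $q$ and derive a contradiction. Since $qd\le\ell+1$ while $\ell+1<\frac{2k-1}{k}\,m<(2k-1)d$, we get $q\le 2k-2$; and since $\ell+1\ge 2m-d>(2k-3)d$ while $\ell+1<(q+1)d$, we get $q\ge 2k-3$. Thus $q\in\{2k-3,2k-2\}$. If $q=2k-3$, then $\lceil(q+1)/2\rceil=\lceil q/2\rceil=k-1$, so the bound above collapses to $m\le(k-1)d$, contradicting $m>(k-1)d$. If $q=2k-2$, the bound becomes $m\le(k-1)d+s$; substituting this together with $\ell+1=(2k-2)d+s$ into $\ell+1<\frac{2k-1}{k}\,m$ and clearing denominators, the coefficients of $d$ on the two sides turn out to differ by exactly $k-1$, as do those of $s$, so the inequality reduces to $(k-1)(d-s)<0$, i.e., $d<s$ — impossible since $s<d$.

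The step I expect to be the crux is the interplay between the two halves: the crude count $2m\le\ell+d+1$ by itself only locates $d$, and the residue-class count by itself is too weak, but together the first forces $q$ into just two values and then the second becomes decisive. Making the two bounds on $q$ meet, and pushing through the borderline case $q=2k-2$, is precisely where the full strength of the hypothesis $\ell<\frac{2k-1}{k}\,m-1$ gets used.
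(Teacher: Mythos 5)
Your argument is correct and complete: the crude disjointness bound $2m\le\ell+d+1$ pins $d$ into $(m/k,\,m/(k-1))$, the residue-class count mod $d$ gives $m\le s\lceil(q+1)/2\rceil+(d-s)\lceil q/2\rceil$, the two bounds together force $q\in\{2k-3,2k-2\}$, and both cases check out (I verified the final reduction to $(k-1)d<(k-1)s$ in the case $q=2k-2$). Note that the paper does not prove this lemma at all — it is quoted from \cite[Lemma 3]{b:l2} — so there is no in-paper proof to compare against; your proof is the standard counting argument for such statements (no two elements of $A$ in the same class mod $d$ can be adjacent), and it makes the result self-contained where the paper merely cites it.
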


We notice that Lemmas \refl{dif2} and \refl{dif3} remain valid if $A$ is a
subset of $\Zp$ (instead of $\Z$), the condition $A\seq[0,\ell]$ is replaced
by $A\seq[u,u+\ell]_p$ with integers $u$ and $\ell<p$, and the intervals in
the conclusions of the lemmas are replaced by their images under $\phi_p$.
Similarly, the estimate of Lemma \refl{3n-3} remains valid if
$A\seq[u,u+\ell]_p$ with integer $u$ and $\ell<p/2$, and given that the set
$\phi_p^{-1}(A)\cap[u,u+\ell]$ is not contained in an arithmetic progression
of length, smaller than $\ell$.

The next lemma is a restatement of a particular case of a $\Z/p\Z$-version of
\cite[Lemma 3]{b:df}.
\begin{lemma}\label{l:EZ}
Let $p$ be a prime and let $1\le\ell<p$ and $u$ be integers. Suppose that
$A\seq\Zp$ is sum-free and that $A_0\seq[u,u+\ell]_p\cap A$, and write
$m:=|A_0|$. If $\ell\le 2m-2$, then for any integer $a\in[\ell/4,\ell/2]$
with $\phi_p(a)\in A$ we have
  $$ [2a-(2m-\ell-2), 2a+(2m-\ell-2)]_p \cap (A \cup (-A)) = \est. $$
\end{lemma}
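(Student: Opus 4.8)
The plan is to reduce the statement to an application of the structure already imposed on a sum-free set by the presence of many small elements, and to exploit the fact that $A$ must be disjoint from $A+A$. Fix $a\in[\ell/4,\ell/2]$ with $\phi_p(a)\in A$. The key observation is that $A_0\seq[u,u+\ell]_p$ has $m$ elements with $\ell\le 2m-2$, so by the $\Zp$-version of Lemma\refl{dif2} (with $k=1$) we get $\bigl(\ell-m+1,\,m\bigr)_p\seq A_0-A_0\seq A-A$; equivalently, $A-A$ contains a full interval of integers of length $2m-\ell-2$ centred appropriately. I would first record that, since $\phi_p(a)\in A$ and $A$ is sum-free, the element $\phi_p(2a)=\phi_p(a)+\phi_p(a)$ lies in $A+A$, hence $\phi_p(2a)\notin A$. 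More is true: adding any element of $A-A$ shifts $2a$ inside $A+A$ or takes it outside $A$ in a controlled way.

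The main step is to run the following exclusion argument. Suppose, for contradiction, that there is $t$ with $|t|\le 2m-\ell-2$ such that $\phi_p(2a+t)\in A\cup(-A)$. Consider first the case $\phi_p(2a+t)\in A$. Because $t$ lies in the interval $\bigl(-(m), \, m\bigr)$ shifted so as to be contained in $A_0-A_0$ — here one must check the centring, using $a\in[\ell/4,\ell/2]$ to guarantee that the relevant translate of the interval from Lemma\refl{dif2} really does surround $2a$ — we may write $\phi_p(t)=\phi_p(a_1-a_2)$ with $a_1,a_2\in A_0\seq A$. Then $\phi_p(2a+t)=\phi_p(a)+\phi_p(a)+\phi_p(a_1)-\phi_p(a_2)$, and regrouping as $\bigl(\phi_p(2a+t)+\phi_p(a_2)\bigr)=\phi_p(a)+\phi_p(a)+\phi_p(a_1)$ exhibits a sum of two elements of $A$ (namely $\phi_p(2a+t)\in A$ and $\phi_p(a_2)\in A$) equal to a sum of two elements of $A$ on the other side; rearranged correctly one obtains a genuine Schur triple $a'+a''=a'''$ inside $A$, contradicting sum-freeness. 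The case $\phi_p(2a+t)\in -A$, i.e. $\phi_p(-2a-t)\in A$, is handled the same way after replacing $t$ by $-t$ and using that the interval from Lemma\refl{dif2} is symmetric about $0$ (it is of the form $(-r,r)$ up to the shift), so $-t$ is again a difference of two elements of $A_0$; then $\phi_p(a)+\phi_p(a)=\phi_p(-2a-t)$ is impossible since the left side is in $A+A$ while $\phi_p(-2a-t)\in A$ and $A\cap(A+A)=\est$ — wait, one must instead produce the triple $\phi_p(-2a-t)+\phi_p(a_1)=\phi_p(a_2)$-type relation after expanding $-t=a_1-a_2$, again landing in a contradiction with sum-freeness.

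The delicate point — and the step I expect to be the main obstacle — is verifying that the interval $[2a-(2m-\ell-2),\,2a+(2m-\ell-2)]_p$ is exactly the translate by $2a$ of an interval that Lemma\refl{dif2} places inside $A_0-A_0$, and that this translation does not ``wrap around'' modulo $p$ in a way that breaks the argument. This is where the hypotheses $a\in[\ell/4,\ell/2]$ and $\ell\le 2m-2$ (hence $2m-\ell-2\ge 0$, and $\ell<p$) are used: one needs $2a$ to be small enough (roughly $2a\le\ell\le 2m-2<p$) that the doubled element together with the excluded window stays within a single residue-interval of length $<p$, so that membership in $A$ versus membership in $A+A$ can be read off honestly. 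Once the geometry is pinned down, the algebraic contradiction with sum-freeness is immediate; essentially all the content is in the bookkeeping of Lemma\refl{dif2}'s interval, which is why the lemma is stated as a black box reduction from \cite[Lemma 3]{b:df}. I would therefore organize the write-up as: (i) invoke Lemma\refl{dif2} to get the symmetric interval inside $A_0-A_0$; (ii) translate by $2a$ and check containment using the size constraints on $a$ and $\ell$; (iii) derive the Schur triple in each of the two cases $\phi_p(2a+t)\in A$ and $\phi_p(2a+t)\in-A$; (iv) conclude.
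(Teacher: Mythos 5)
Your approach does not close, and the central failure is the step where you claim to extract a Schur triple. Writing $t=a_1-a_2$ with $a_1,a_2\in A_0$ and assuming $\phi_p(2a+t)\in A$, the identity you obtain is $\phi_p(2a+t)+\phi_p(a_2)=\phi_p(a)+\phi_p(a)+\phi_p(a_1)$, i.e.\ a sum of \emph{two} elements of $A$ equal to a sum of \emph{three} elements of $A$. Sum-freeness forbids only relations $x+y=z$ with $x,y,z\in A$; it says nothing about relations among four or five elements, and no rearrangement turns the above into a forbidden triple. The disjointness facts actually available are $A\cap(A+A)=\est$ and $(A-A)\cap(A\cup(-A))=\est$; to use the latter you would need $\phi_p(2a+t)\in A-A$, which your computation does not give --- it only places $2a+t$ in $a+a+(A_0-A_0)$. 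Your own hesitation in the $-A$ case (``wait, one must instead produce\dots'') is a symptom of this same problem. There is a second, independent gap: Lemma\refl{dif2} with $k=1$ yields $(\ell-m+1,m)_p\seq A_0-A_0$, an interval of only $2m-\ell-2$ integers which never contains $0$ (since $m\le\ell+1$ forces $\ell-m+1\ge0$), whereas the forbidden window around $2a$ contains $4m-2\ell-3$ integers; so values of $t$ with $|t|\le\ell-m+1$, including $t=0$, are not certified to lie in $A_0-A_0$ at all, and the window is too long to be a single translate of the dif2 interval.

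The paper's proof is entirely different and purely combinatorial. Since $\phi_p(a)\in A$, each pair $\{z,z+a\}_p$ meets $A$ in at most one element; partitioning $[u,u+2a-1]_p$ and $[u+\ell-2a+1,u+\ell]_p$ into such pairs forces $|A_0\cap[u+2a,u+\ell]_p|\ge m-a$ and $|A_0\cap[u,u+\ell-2a]_p|\ge m-a$. If some $x$ in the forbidden window had $\phi_p(x)\in A\cup(-A)$, then the pairs $\{u+j,u+j+x\}_p$ would likewise each meet $A_0$ at most once; one checks (using $\ell/4\le a\le\ell/2$) that enough of these pairs are pairwise disjoint and contained in $[u,u+\ell-2a]_p\cup[u+2a,u+\ell]_p$ to contradict the two lower bounds. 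You would need an argument of this double-counting type; the ``translate the difference set'' route cannot be repaired, because membership of $2a+t$ in a translate of $A_0-A_0$ simply is not incompatible with $\phi_p(2a+t)\in A$.
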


For the convenience of the reader we provide a proof.

\begin{proof}[Proof of Lemma \refl{EZ}]
Since $|\{z,z+a\}_p\cap A|\le 1$ for any integer $z$, the set $A_0$ has at
most $a$ elements in each of the intervals $[u,u+2a-1]_p$ and
$[u+(\ell-2a+1),u+\ell]_p$, and consequently we have
\begin{equation}\label{e:parti}
  |A_0\cap [u+2a,u+\ell]_p|\ge m-a\quad \text{and}
                                    \quad |A_0\cap [u,u+(\ell-2a)]_p|\ge m-a.
\end{equation}
Assuming now that there exists an integer $x\in
[2a-(2m-\ell-2),2a+(2m-\ell-2)]$ with $\phi_p(x)\in A\cup(-A)$, we will
obtain a contradiction.

Suppose first that $x>2a$ and consider in this case the two-element sets
  $$ \{u,u+x\}_p,\ \{u+1,u+1+x\}_p,\ \ldots,\ \{u+\ell-x,u+\ell\}_p. $$
(We notice that \refe{parti}, along with $a\le \ell/2<m$, implies that
 $m-a\le \ell-2a+1$, whence $\ell\ge a+m-1$ and consequently,
$\ell-x\ge \ell-(2a+(2m-\ell-2))=2(\ell-a-m+1)\ge 0$.) These sets are
pairwise disjoint (as $u+\ell-x<u+x$ in view of $2x>4a\ge \ell$) and they all
are contained in $[u,u+\ell-2a]_p\cup[u+2a,u+\ell]_p$. Since at most one
element out of each of these $\ell-x+1$ sets belongs to $A_0$, we conclude
that
  $$ |[u,u+\ell-2a]_p\stm A_0| + |[u+2a,u+\ell]_p\stm A_0| \ge \ell-x+1. $$
Therefore
\begin{align*}
  |[u,u+\ell-2a]_p\cap A_0| + |[u+2a,u+\ell]_p\cap A_0|
    & \le 2(\ell-2a+1)-(\ell-x+1) \\
    & \le \ell-4a + (2a+(2m-\ell-2)) +1 \\
    & = 2(m-a) - 1,
\end{align*}
contradicting \refe{parti}.

Similarly, if $x<2a$, then we obtain a contradiction with \refe{parti}
considering the $l-4a+x+1$ sets
\begin{multline*}
  \{u+2a-x,u+2a\}_p,\ \{u+2a-x+1,u+2a+1\}_p, \\
    \ldots,\ \{u+l-2a,u+l-2a+x\}_p
\end{multline*}
which, again, are pairwise disjoint and contained in
$[u,u+l-2a]_p\cup[u+2a,u+l]_p$.
\end{proof}

\begin{lemma}\label{l:p-n3}
Let $p$ be a prime and suppose that $A\seq\Zp$ is sum-free. Write $n:=|A|$.
If
  $$ [-(p-n+1)/3,(p-n+1)/3]_p \cap A = \est, $$
then $A\seq[n,p-n]_p$.
\end{lemma}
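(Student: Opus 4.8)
The plan is to reduce the lemma to two ingredients: the elementary fact that a sum-free set is disjoint from its own difference set, and the $\Zp$-version of Lemma \refl{dif2} applied to the cyclic interval that $A$ is already assumed to avoid. No Fourier analysis or deep input is needed here.

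If $A=\est$ there is nothing to prove, so assume $n\ge 1$; by Cauchy--Davenport $n\le(p+1)/3$. Put $s:=\lfl(p-n+1)/3\rfl+1$. The hypothesis says precisely that $A$ omits the residues $0,\pm1\longc\pm(s-1)$, that is, $A\seq[s,p-s]_p$, a genuine cyclic interval of length $\ell:=p-2s$, where $1\le\ell<p$. If $s\ge n$, then $[s,p-s]_p\seq[n,p-n]_p$ and we are done, so assume henceforth $s\le n-1$. A short computation with the floor shows that $s\ge(p-n+2)/3$ in all cases; combined with $s\le n-1$ this gives $\ell=p-2s\le s+n-2\le 2n-3$, so in particular the hypothesis $\ell\le 2m-2$ of Lemma \refl{dif2} holds with $m=n$.

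Next, apply the $\Zp$-version of Lemma \refl{dif2} to $A\seq[s,p-s]_p$ with $k=1$, obtaining $(\ell-n+1,n)_p\seq A-A$. Since $0\le\ell-n+1<n<p$, this open interval, read as a set of residues, is $\{\ell-n+2\longc n-1\}$, and the inequality $\ell-n+2\le s$ (a restatement of $s\ge(p-n+2)/3$) shows that it contains $\{s\longc n-1\}$. As $A-A=-(A-A)$, the reflected block $\{p-n+1\longc p-\ell+n-2\}\supseteq\{p-n+1\longc p-s\}$ is likewise contained in $A-A$.

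Finally, a sum-free set satisfies $A\cap(A-A)=\est$: if $c=a_1-a_2$ with $a_1,a_2,c\in A$, then $a_2+c=a_1\in A$, contradicting sum-freeness. Hence $A$ is disjoint from both $\{s\longc n-1\}$ and $\{p-n+1\longc p-s\}$; deleting these two blocks from $[s,p-s]_p=\{s\longc p-s\}$ leaves exactly $\{n\longc p-n\}=[n,p-n]_p$, which therefore contains $A$. I foresee no genuine obstacle here; the only point demanding care is the floor bookkeeping --- the chain of inequalities $s\ge(p-n+2)/3$, $\ell-n+2\le s$, and $\ell\le 2n-3$ --- which is exactly what makes Lemma \refl{dif2} applicable and guarantees that the interval it supplies reaches all the way down to $s$.
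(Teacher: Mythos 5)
Your proof is correct, but it takes a genuinely different route from the paper's. The paper argues directly and elementarily: setting $\mu:=\min\{|z|\colon z\in\Z,\ \phi_p(z)\in A\}$, the pairing $a\mapsto a+\mu$ shows $|[\mu,3\mu)_p\cap A|\le\mu$; if $\mu<p/4$ this forces $3\mu\le p-n+1$, contradicting the hypothesis, and once $\mu>p/4$ the interval $[\mu,3\mu)_p$ swallows all of $A$, giving $n\le\mu$ and hence $A\seq[n,p-n]_p$. You instead feed the hypothesis into the $\Zp$-version of Lemma~\ref{l:dif2}: since $A$ lives in the cyclic interval $[s,p-s]_p$ of length $\ell=p-2s\le 2n-3$, that lemma (with $k=1$, $m=n$) places the block $\{s,\ldots,n-1\}$ and its reflection inside $A-A$, and the disjointness $A\cap(A-A)=\est$ then trims $A$ down to $[n,p-n]_p$. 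Your bookkeeping is sound: $s=\lfl(p-n+1)/3\rfl+1\ge(p-n+2)/3$ gives both $\ell\le s+n-2$ and $\ell-n+2\le s$, and the case split $s\ge n$ versus $s\le n-1$ is what makes $\ell\le 2n-3$ available in the nontrivial case. The trade-off is that you import Lemma~\ref{l:dif2} (a nontrivial inverse-type result from elsewhere in the paper's toolkit), whereas the paper's proof is self-contained and uses only the single difference $\mu$ rather than a whole interval of differences; both arguments ultimately rest on the same fact that a sum-free set avoids its own difference set, and both exploit the threshold $(p-n+1)/3$ at exactly the analogous point.
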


\begin{proof}
Set
\begin{equation}\label{e:mu}
  \mu := \min \{ |z| \colon z\in\Z,\,\phi_p(z)\in A \},
\end{equation}
so that $\mu>(p-n+1)/3$. Clearly, for any $a\in[\mu,2\mu)_p\cap A$ we have
$a+\mu\in[2\mu,3\mu)_p\stm A$, which gives
  $$ |[\mu,3\mu)_p \cap A| \le \mu. $$
Assuming that $\mu<p/4$ we get then
  $$ n \le \mu + |[3\mu,p-\mu]_p| \le \mu + (p-4\mu+1) = p - 3\mu + 1 $$
whence $3\mu\le p-n+1$, contradicting the assumptions. We have therefore
$\mu>p/4$ and then $3\mu>p-\mu$, implying
  $$ n = |A\cap[\mu,3\mu)_p| \le \mu; $$
that is, $[0,n-1]_p\cap A=\est$. In a similar way (or applying the argument
above to the set $-A:=\{-a\colon a\in A\}$) we obtain
 $[p-(n-1),p]_p\cap A=\est$. The result follows.
\end{proof}

\section{Proof of Theorem \reft{main}}\label{s:mainproof}

Suppose that $p$ is a prime and $A\seq\Zp$ is a sum-free set with
$n:=|A|>0.318p$. The computations below tacitly assume that $p$ is
sufficiently large.

Recalling the definition of $\hA(z)$ from the beginning of Section
\refs{lemmas}, we start with

\begin{claim} There exists an integer $z_0$ with $\phi_p(z_0)\neq 0$ such that
$|\hA(z_0)|>0.152p$.
\end{claim}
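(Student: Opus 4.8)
The plan is to combine the classical Fourier-analytic lower bound for $\max_z|\hA(z)|$ with Lemma~\refl{intrick}, squeezing out the improvement by exploiting the symmetry $|\hA(-z)|=|\hA(z)|$.

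First I would record two standard identities. By Parseval, $\sum_{z=0}^{p-1}|\hA(z)|^2=pn$; since $\hA(0)=n$, this gives $\sum_{z\not\equiv0\,(p)}|\hA(z)|^2=n(p-n)$. On the other hand, $A$ being sum-free means that the number of triples $(a_1,a_2,a_3)\in A^3$ with $a_1+a_2=a_3$ is zero; writing the indicator of $a_1+a_2-a_3\equiv0$ as an average of characters, this number equals $p^{-1}\sum_z\hA(z)^2\overline{\hA(z)}=p^{-1}\sum_z|\hA(z)|^2\hA(z)$, so that $\sum_{z\neq0}|\hA(z)|^2\hA(z)=-n^3$, whence by the triangle inequality $n^3\le\sum_{z\neq0}|\hA(z)|^3$.

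Next I would group the $p-1$ non-zero residues into the $(p-1)/2$ pairs $\{z,-z\}$, on each of which $|\hA(z)|$ is constant, pick one representative $z$ from each pair, and put $w_z:=2|\hA(z)|^2/(n(p-n))$. These $(p-1)/2$ numbers are non-negative; by the Parseval identity $\sum_z w_z=1$, and $\sum_z w_z^{3/2}=2^{3/2}(n(p-n))^{-3/2}\sum_z|\hA(z)|^3\ge 2^{3/2}(n(p-n))^{-3/2}\cdot\frac12 n^3=\sqrt2\,(n/(p-n))^{3/2}$, the factor $\frac12$ reflecting that we kept only one representative per pair. Since $n>0.318p$ and $t\mapsto t/(1-t)$ is increasing, the right-hand side exceeds $\gamma^*:=\sqrt2\,(0.318/0.682)^{3/2}$, and one checks $0.45<\gamma^*<0.5$. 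Thus Lemma~\refl{intrick}, applied with $\kap=1/2$, $\gam=\gamma^*$ and $P=(p-1)/2$ (which exceeds $K$ for $p$ large), is available with $K=\lfl(\gamma^*)^{-2}\rfl=4$, and it yields $\max_z w_z\ge X$, where $X$ is the unique root of $4x^{3/2}+(1-4x)^{3/2}=\gamma^*$ in $(1/5,1/4]$; a routine numerical check gives $X>0.2132$.

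Finally, if $z_0$ is a representative at which $w_{z_0}=\max_z w_z$, then $\phi_p(z_0)\ne0$ and $|\hA(z_0)|^2=\frac12 n(p-n)\,w_{z_0}\ge\frac12 n(p-n)X>\frac12\cdot0.318\cdot0.682\cdot0.2132\,p^2>0.02311\,p^2>(0.152\,p)^2$, which is the assertion of the Claim. The only delicate point is the numerical bookkeeping at the very end: the whole gain over the bare estimate $\max_{z\neq0}|\hA(z)|\ge n^2/(p-n)>0.148p$ comes from the pairing, which replaces $(n/(p-n))^{3/2}$ by $\sqrt2\,(n/(p-n))^{3/2}$ and thereby pushes the extremal configuration in Lemma~\refl{intrick} away from the degenerate single-spike profile; one must check that this is just enough to clear the threshold $0.152$, with little room to spare.
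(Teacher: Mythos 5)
Your proposal is correct and follows essentially the same route as the paper: Parseval plus the cubic identity $\sum_z|\hA(z)|^2\hA(z)=0$ for sum-free sets, the pairing of $z$ with $-z$ to gain the factor $\sqrt2$, and then Lemma\refl{intrick} with $\kap=1/2$ and $K=4$. The numerical constants you obtain ($\gam^*\approx0.45028$, $X>0.2132$) are marginally sharper than the paper's rounded values ($0.4502$, $0.2131$) but lead to the same conclusion.
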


\begin{proof}
Let $\alp:=n/p$, so that $\alp>0.318$. By the Parseval identity we have
\begin{equation}\label{e:parseval}
  \sum_{z=1}^{p-1} |\hA(z)|^2 = \alp(1-\alp)p^2.
\end{equation}
Using the fact that $|\hA(p-z)|=|\hA(z)|$ for any $z\in\Z$ and letting
$P:=(p-1)/2$ we re-write \refe{parseval} as
  $$ \sum_{z=1}^P \frac{|\hA(z)|^2}{\alp(1-\alp)p^2/2} = 1. $$
On the other hand, since $A$ is sum-free we have
  $$ \sum_{z=0}^{p-1} |\hA(z)|^2 \hA(z) = 0, $$
whence
  $$ \sum_{z=1}^{p-1} |\hA(z)|^3
         \ge - \sum_{z=1}^{p-1} |\hA(z)|^2\hA(z)
                                             = |\hA(0)|^3 = \alp^3 p^3 $$
and consequently
  $$ \sum_{z=1}^P \lpr \frac{|\hA(z)|^2}{\alp(1-\alp)p^2/2} \rpr^{3/2}
       \ge \sqrt{2} \lpr \frac{\alp}{1-\alp} \rpr^{3/2} > 0.4502. $$
Applying Lemma \refl{intrick} with $\kap=0.5$ and $\gam=0.4502$ (which leads
to $K=4$), we conclude that there exists $z_0\in[1,P]$ with
  $$ \frac{|\hA(z_0)|^2}{\alp(1-\alp)p^2/2} > 0.2131 $$
and accordingly
  $$ |\hA(z_0)| > \sqrt{0.2131\,\alp(1-\alp)/2} \; p > 0.152p. $$
\end{proof}

Dilating $A$, if necessary, we assume that, in fact,
\begin{equation}\label{e:Ahat1}
   |\hA(1)| > 0.152p.
\end{equation}

Choose an integer $u_0$ such that
the number of elements of $A$ in
$[u_0,u_0+p/2)_p$ is maximized, set $A_0:=A\cap[u_0,u_0+p/2)_p$ and
$m:=|A_0|$, and let $B_0:=\phi_p^{-1}(A_0)\cap[u_0,u_0+p/2)$. Furthermore,
put $\ell:=\max B_0 - \min B_0$; thus $A_0$ is contained in a block of
$\ell+1$ consecutive elements of $\Zp$ and
  $$ m = \max \{ |A\cap[u,u+p/2)_p| \colon u\in\Z \}. $$
We notice that the last equality implies that
\begin{equation}\label{e:inhalf}
  n-m \le |A\cap[u,u+p/2)_p| \le m
\end{equation}
for all real $u$.

By Lemma \refl{frep} we have
\begin{equation}\label{e:n0large}
  m \ge \frac n2\, + \frac{p}{2\pi}
      \,\arcsin\Big( |\hA(1)| \sin \frac\pi{p} \Big)
          > \Big( 0.159 + \frac1{2\pi} \arcsin(0.1519\pi) \Big) p > 0.238 p.
\end{equation}
Since $B_0$ is a subset of an interval of length $\ell<p/2$, this shows that
$B_0$ is not contained in an arithmetic progression with difference greater
than $2$, and we now dispose of the case where $B_0$ is contained in an
arithmetic progression with difference $2$.

\begin{claim}\label{c:gcd2}
If $B_0$ is contained in an arithmetic progression with difference $2$, then
the conclusion of the theorem holds true.
\end{claim}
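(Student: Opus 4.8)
The plan is to use the one degree of freedom still unexploited, namely dilation. Suppose $B_0$ lies in an arithmetic progression with difference $2$. Dilating $A$ by the inverse of $2$ modulo $p$ collapses that progression into a genuine interval whose length is less than $p/4$, and the lower bound $m>0.238p$ will then be strong enough to force the dilated set to be essentially an interval; Lemma \refl{p-n3} will finish the argument.

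In detail, I would put $v:=\min B_0$ and $2t:=\max B_0-\min B_0=\ell$, so that $B_0\seq\{v,v+2\longc v+2t\}$ and $t=\ell/2<p/4$. Fixing an integer $w$ with $2w\equiv1\pmod p$, set $C:=w\ast A$ and $C_0:=w\ast A_0$; then $C$ is sum-free with $|C|=n$ and $|C_0|=m$. Since $w(v+2j)\equiv wv+j\pmod p$ for $0\le j\le t$, one gets $C_0\seq[wv,wv+t]_p$, so $C_0$ is contained in an interval of length $t<p/4<p$. Because $m>0.238p$ by \refe{n0large}, this forces $t<3m/2-1$ for $p$ large, and the $\Zp$-version of Lemma \refl{dif3} with $k=2$ then yields $(-m,m)_p\seq C_0-C_0\seq C-C$.

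Next I would observe that a sum-free set is disjoint from its difference set: $C$ sum-free gives $0\notin C$, and if $x\in C\cap(C-C)$, say $x=c-c'$ with $c,c'\in C$, then $c=c'+x$ contradicts sum-freeness; hence $C\cap(C-C)=\est$, and combining with the previous step, $(-m,m)_p\cap C=\est$. Finally, since $n>0.318p$ and $m>0.238p$ we have $3m+n>p+1$ for $p$ large, so $(p-n+1)/3<m$ and therefore $[-(p-n+1)/3,(p-n+1)/3]_p\seq(-m,m)_p$ is disjoint from $C$. Lemma \refl{p-n3}, applied to the sum-free set $C$, gives $C\seq[n,p-n]_p$; since $A=2\ast C$, we conclude $A\seq2\ast[n,p-n]_p$, which is the conclusion of the theorem with $d=2$.

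The only real obstacle is the first step — seeing that the correct move is the dilation by $2^{-1}$; after that the argument is driven entirely by the two numerical inequalities $t<p/4<3m/2-1$ and $3m+n>p$, and I would note that both genuinely rely on the Freiman-type estimate $m>0.238p$ of \refe{n0large} and would fail under the cruder bound $m\ge n/2$.
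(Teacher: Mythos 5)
Your proof is correct and follows essentially the same route as the paper's: both exploit the dilation by (an inverse of) $2$ to compress $A_0$ into an interval of length less than $p/4$, apply the $\Zp$-version of Lemma\refl{dif3} with $k=2$ to get $(-m,m)_p$ inside the difference set, use the disjointness of a sum-free set from its difference set, and finish with Lemma\refl{p-n3} via the inequality $(p-n+1)/3<0.228p<m$ from\refe{n0large}. The only cosmetic difference is that the paper keeps $A$ fixed, writes $A_0=2\ast C$ or $2\ast C+1$, and dilates by $(p-1)/2$ only at the end, whereas you dilate by $2^{-1}$ at the outset; these are equivalent.
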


\begin{proof}
If $B_0$ is contained in an arithmetic progression with difference $2$, then
there is an integer $u$ and a set $C\seq\Zp$ such that $C\seq[u,u+p/4)_p$ and
either $A_0=2\ast C$, or $A_0=2\ast C+1$. Evidently, we have $\lfl
p/4\rfl<\frac32\,m-1$, whence $(-m,m)_p\seq C-C$ by Lemma \refl{dif3} (see
also the remark after the lemma), implying $2\ast(-m,m)_p\seq A_0-A_0$. Since
$A$ is sum-free, we derive that the set $2\ast(-m,m)_p$ is disjoint with $A$,
and replacing $A$ with its dilation by the factor $(p-1)/2$ we obtain
$A\seq[m,p-m]_p$. The assertion now follows from Lemma \refl{p-n3}, as
  $$ (p-n+1)/3 < 0.228 p < m $$
by \refe{n0large}.
\end{proof}

In what follows we assume that $B_0$ is \emph{not} contained in an arithmetic
progression with difference greater than $1$.

Since the sets $A$ and $A-A$ are disjoint, we have
  $$ n \le p - |A-A| \le p - |A_0-A_0|. $$
To estimate $|A_0-A_0|$ we apply Lemma \refl{3n-3}; this gives
\begin{equation}\label{e:diffr}
  p-n \ge |A_0-A_0| \ge \min \{ \ell+m, 3m-3 \}.
\end{equation}
Assuming that $\ell\ge 2m-3$ and using \refe{n0large} we then obtain
  $$ p \ge n + 3m - 3 > (0.318+3\cdot0.238)p - 3 = 1.032p - 3, $$
a contradiction. Thus
\begin{gather}
  \ell \le 2m-4, \label{e:l0small} \\
  |A_0-A_0| \ge \ell+m, \label{e:A0-A0}
\end{gather}
and $p-n \ge \ell+m$ by \refe{diffr}, whence
\begin{equation}\label{e:l0smallprime}
  \ell \le p-n-m.
\end{equation}
We assume, furthermore, that
\begin{equation}\label{e:l032n0}
  \ell \ge \frac32\,m-1;
\end{equation}
for otherwise $(-m,m)_p\cap A=\est$ by Lemma \refl{dif3}, and consequently
$A\seq[n,p-n]_p$ (as at the end of the proof of Claim \refc{gcd2}).
Assumption \refe{l032n0} will eventually lead us to a contradiction.

\begin{claim}\label{c:n02excluded}
We have
  $$ A\cap(-m/2,m/2)_p=\est. $$
\end{claim}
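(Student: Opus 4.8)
The plan is to argue by contradiction. Replacing $A$ by $-A$ if necessary — an operation that changes neither $n$, $m$, $\ell$, nor $|\hA(1)|$, and leaves the assertion to be proved intact — we may assume there is an integer $a$ with $\phi_p(a)\in A$ and $0<a<m/2$; here $a\neq0$, since $A$ is sum-free. Write $v_0:=\min B_0$, so that $A_0\seq[v_0,v_0+\ell]_p$, with $\phi_p(v_0),\phi_p(v_0+\ell)\in A_0$ and $A\cap[v_0,v_0+\ell]_p=A_0$.

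The first step is to bound $a$ from both sides. Since $\phi_p(a)\in A$ and $A$ is sum-free, the translate $A_0+\phi_p(a)$ is disjoint from $A_0$ — a common point would give a solution of $x+y=z$ with $x,y,z\in A$ — and since both sets lie in a block of $\ell+a+1<p$ consecutive residues we get $2m\le\ell+a+1$, that is, $a\ge 2m-\ell-1$. On the other hand, $A$ is disjoint from $A_0-A_0$, so Lemma \refl{dif2} (valid here in its $\Zp$ form since $\ell\le 2m-4\le 2m-2$) tells us that $\phi_p(a)$ misses the image of every interval $\lpr\frac{\ell-m+1}k,\frac mk\rpr$, $k\ge1$; the case $k=2$ already gives $a\le\frac{\ell-m+1}2<m/2$, and letting $k$ vary yields $a\ge m/k_0$ with $k_0:=\lfl(\ell-m+1)/(2m-\ell-1)\rfl+1$.

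Next I would split on the size of $\ell$. If $\ell<\frac53m-1$, then Lemma \refl{dif3} with $k=3$ gives $(-m/2,m/2)_p\seq A_0-A_0$, contradicting $\phi_p(a)\in A\cap(-m/2,m/2)_p$; this settles the range of small $\ell$. When $\ell\ge\frac53m-1$, combining this with $\ell\le p-n-m$ from \refe{l0smallprime} and with $n>0.318p$, $m>0.238p$ confines $m$, $\ell$ and $a$ to narrow numerical ranges. In this regime $a<\ell/4$, so Lemma \refl{EZ} is not applicable to the block $[v_0,v_0+\ell]_p$ directly; instead I would produce a sub-interval $[u,u+\ell']_p$ with $\ell'\in[2a,4a]$ meeting $A$ in $m'\ge\ell'/2+1$ points (so that $\ell'\le 2m'-2$), using that the density $m/(\ell+1)$ of $A$ on $[v_0,v_0+\ell]$ exceeds $\frac12$ (because $\ell\le 2m-4$), together with the two-sided bounds $2m-\ell-1\le a\le\frac{\ell-m+1}2$ — for instance by weighing against each other the two sub-windows anchored at the endpoints $v_0$ and $v_0+\ell$ of the block. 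Lemma \refl{EZ} then produces an interval about $2\phi_p(a)$ disjoint from $A\cup(-A)$, and confronting this with the locations of $\phi_p(a)$, $\phi_p(v_0)$, $\phi_p(v_0+\ell)$ and of the $m$ points of $A_0$ inside $[v_0,v_0+\ell]_p$ yields the contradiction.

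The delicate part is precisely the second regime $\ell\ge\frac53m-1$: the density of $A$ on $[v_0,v_0+\ell]$ exceeds $\frac12$ only by $O(1/\ell)$, so locating a window admissible for Lemma \refl{EZ} requires a careful count played off against the two-sided bounds on $a$, and one must still extract a genuine contradiction from the forbidden interval it outputs. The small-$\ell$ case, by contrast, is immediate from Lemma \refl{dif3}, and the two preliminary bounds on $a$ are routine.
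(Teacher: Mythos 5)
Your reduction steps are fine as far as they go: the symmetry $A\mapsto -A$, the lower bound $a\ge 2m-\ell-1$ from the disjointness of $A_0$ and $A_0+\phi_p(a)$, the exclusion of $a$ from the intervals $\lpr(\ell-m+1)/k,\,m/k\rpr$ via Lemma\refl{dif2}, and the disposal of the range $\ell<\frac53\,m-1$ via Lemma\refl{dif3} are all correct. But the main case $\ell\ge\frac53\,m-1$ is left as a plan rather than a proof, and the plan has a genuine hole at its central step. You need a window $[u,u+\ell']_p$ with $\ell'\in[2a,4a]$ containing $m'\ge\ell'/2+1$ elements of $A$, positioned so that $a\in[\ell'/4,\ell'/2]$ relative to it; yet, as you yourself note, the density of $A_0$ on $[v_0,v_0+\ell]$ exceeds $1/2$ only by $O(1/\ell)$ (the total excess is $m-(\ell+1)/2\ge 3/2$), so averaging over windows of length about $4a\ll\ell$ yields only $m'\ge(\ell'+1)/2$, i.e.\ $\ell'\le 2m'-1$, one short of the hypothesis $\ell'\le 2m'-2$ of Lemma\refl{EZ} --- and nothing forces an admissible window to sit where $a$ falls into $[\ell'/4,\ell'/2]$. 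Even granting such a window, you never derive the contradiction: the forbidden interval $[2a-(2m'-\ell'-2),2a+(2m'-\ell'-2)]_p$ has only $2(2m'-\ell'-2)+1$ elements, which can be as few as one, and ``confronting this with the locations of the points of $A_0$'' is not an argument.

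The paper's proof is of an entirely different nature, which also explains why a purely combinatorial route is unlikely to close at this constant. After the same reduction via Lemma\refl{dif2} to showing $\mu>(\ell-m)/2$ for $\mu:=\min\{|z|\colon\phi_p(z)\in A\}$, it exploits the Fourier bound\refe{Ahat1}: since $A$ and $A+\mu$ are disjoint, the exponential sum of $A\cup(A+\mu)$ has absolute value $2|\hA(1)|\cos(\pi\mu/p)$, and comparing this with the largest possible exponential sum of a $2n$-element set (when $m>0.25p$), or feeding it into Lemma\refl{frep} together with\refe{inhalf} (when $m<0.25p$), produces trigonometric inequalities that fail numerically for $n/p\in(0.318,0.334)$ once $\mu\le(\ell-m)/2$. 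The claim thus rests on the full strength of $|\hA(1)|>0.152p$ --- information your argument never uses in the hard regime.
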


\begin{proof}
Let $\mu$ be defined by \refe{mu}; we want to show that $\mu\ge m/2$. Notice,
that by \refe{l0small} and Lemma \refl{dif2} we have
$((\ell-m+1)/2,m/2)_p\seq A_0-A_0\seq A-A$, and consequently
$((\ell-m+1)/2,m/2)_p\cap A=\est$; thus, it actually suffices to prove that
$\mu>(\ell-m)/2$. Assume that this is wrong, and hence
\begin{equation}\label{e:loc20}
  \cos \pi \, \frac\mu p \ge \cos \pi \, \frac{\ell-m}{2p}
        \ge \cos \pi \, \frac{p-n-2m}{2p} = \sin \pi \, \frac{n+2m}{2p}
\end{equation}
holds by \refe{l0smallprime}.

Since $A\cap(A+\mu)=\est$, we have $|A\cup(A+\mu)|=2n$  and
\begin{equation}\label{e:Strick}
   \bigg| \sum_{a\in A\cup(\mu+A)} \e_p(a) \bigg|
                       = \big|(1+\e_p(\mu))\,\hA(1)\big|
                                           = 2 |\hA(1)|\, \cos\pi\frac\mu p.
\end{equation}
We distinguish two cases. Assume first that $m>0.25p$. From \refe{Strick} we
get
  $$ 2 |\hA(1)|\, \cos\pi\frac{\mu}{p}
               \le \bigg| \sum_{z=0}^{2n-1} \e_p(z) \bigg|
                                    = \frac{\sin 2\pi n/p}{\sin \pi/p} $$
which, along with \refe{loc20}, \refe{Ahat1}, and the assumption $m>0.25p$,
implies
\begin{align*}
  \sin2\pi\frac np
    &\ge 2|\hA(1)|\, \cos\pi\frac{\mu}{p} \,\sin\frac\pi{p} \\
    &>   0.954 \, \frac{|\hA(1)|}{0.152p} \, \cdot \frac{\sin\pi/p}{3.141/p}
                                               \cdot \cos\pi\frac{\mu}{p} \\
    &>   0.954 \sin \pi \, \frac{n+2m}{2p} \\
    &>   0.954 \sin \frac{\pi}2\lpr \frac np+0.5\rpr.
\end{align*}
It is easy to verify, however, that the function
  $$ \sin 2\pi x - 0.954 \sin \frac{\pi}2\,(x+0.5) $$
is negative for any $x\in(0.318,0.334)$, a contradiction.

Assume now that $m<0.25p$. In this case we apply Lemma \refl{frep} to the set
$A\cup(A+\mu)$, observing that by \refe{inhalf} any interval of the form
$[u,u+p/2)_p$ with integer $u$ contains at most $2m$ elements of this set; in
view of \refe{Strick} this yields
  $$ 2m \ge n + \frac{p}{2\pi}\,
            \arcsin\lpr 2|\hA(1)|\cos\pi\frac\mu p \, \sin\frac\pi p \rpr  $$
and hence
  $$ 2\pi\frac{2m-n}p
       \ge \arcsin \lpr 2|\hA(1)|\cos\pi\frac\mu p \, \sin\frac\pi p \rpr. $$
Since $2\pi(2m-n)/p\le 2\pi m/p<\pi/2$ we obtain
  $$ \sin 2\pi\frac{2m-n}p
                          \ge 2|\hA(1)|\cos\pi\frac\mu p \, \sin\frac\pi p $$
which, as above, yields
\begin{equation}\label{e:muest}
  \sin 2\pi\frac{2m-n}p > 0.954 \cos\pi\frac\mu p
                                      > 0.954 \sin \pi \, \frac{n+2m}{2p}.
\end{equation}
On the other hand, it is not difficult to see that the function
  $$ \sin 2\pi (2y-x) - 0.954 \sin\frac\pi2\,(x+2y) $$
is negative in the region $x\in(0.318,0.334),\,y\in(0.23,0.25)$, a
contradiction again.
\end{proof}

Set
\begin{alignat*}{5}
  A_1^+ &:= A\cap[m/2,\ell-m+1]_p\,,
       &\quad A_2^+ &:= A\cap[m,p/4)_p\,, \\
  A_1^- &:= A\cap[-(\ell-m+1),-m/2]_p\,, &\quad A_2^- &:= A\cap(-p/4,-m]_p\,,
\intertext{and}
  A_1 &:= A_1^+\cup A_1^-\,, &\quad A_2 &:= A_2^+\cup A_2^-,
\end{alignat*}
so that
\begin{equation}\label{e:A1A2}
  A\cap[0,p/4)_p=A_1^+\cup A_2^+\quad \text{and}
                                      \quad A\cap(-p/4,0]_p=A_1^-\cup A_2^-
\end{equation}
by Lemma \refl{dif2} (applied with $k=1$) and Claim \refc{n02excluded};
observe also that $m/2\le \ell-m+1$ by \refe{l032n0}, and that $A_2=\est$ if
$m>0.25p$. For definiteness, we assume for the rest of the proof that
\begin{equation}\label{e:+>-}
  |A_1^+| \ge |A_1^-|
\end{equation}
and hence $A_1^+\neq\est$: otherwise by \refe{inhalf} we would have
  $$ n-m \le |A_2| \le \max \{ 0, p/2-2m \}, $$
leading to either $m=n$ (in which case we are done by Lemma \refl{p-n3}), or
$n+m<p/2$ (which contradicts \refe{n0large}).

Given two subsets $S_1$ and $S_2$ of an additively written semigroup, we
write
  $$ S_1+S_2 := \{ s_1+s_2\colon s_1\in S_1,\,s_2\in S_2 \}. $$
It is well-known and easy to prove that if $S_1$ and $S_2$ are finite
non-empty sets of integers, then $|S_1+S_2|\ge|S_1|+|S_2|-1$ holds. Clearly,
this inequality remains valid also if $S_1$ and $S_2$ are non-empty subsets
of $\Zp$, contained in two intervals of total length, smaller than $p$.

Our next claim refines the estimate \refe{l0smallprime}.
\begin{claim}\label{c:l0verysmall}
We have
  $$ \ell \le p - n - m - 2|A_2| + 2. $$
\end{claim}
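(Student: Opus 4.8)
The plan is to sharpen the inequality $\ell\le p-n-m$ of \refe{l0smallprime}, which came from $p-n\ge|A-A|\ge|A_0-A_0|\ge\ell+m$ (see \refe{diffr} and \refe{A0-A0}), by exhibiting inside $\Zp\stm A$ about $2|A_2|$ further points beyond the $\ell+m$ points supplied by $A_0-A_0$. The first observation is that $A_0$ lies in a block of $\ell+1$ consecutive residues, so $A_0-A_0\seq[-\ell,\ell]_p$; and since $A$ is disjoint from $A-A$, the set $A_0-A_0$ is a subset of $\Zp\stm A$, of cardinality at least $\ell+m$ by \refe{A0-A0}, contained in the block $[-\ell,\ell]_p$.

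Next I would bring in the self-sumsets $A_2^++A_2^+$ and $A_2^-+A_2^-$. If $A_2\neq\est$ then $m<p/4$ (otherwise the interval $[m,p/4)_p$ degenerates), so $A_2^++A_2^+\seq[2m,p/2)_p$ and $A_2^-+A_2^-\seq(-p/2,-2m]_p$. Each of these self-sumsets is disjoint from $A$ because $A$ is sum-free --- this includes the ``doubled'' elements $2a$, $a\in A_2^\pm$, since $a+a=2a\in A$ would violate sum-freeness. Moreover, both self-sumsets lie inside $[2m,p-2m]_p$, which is disjoint from $[-\ell,\ell]_p$ thanks to $\ell\le 2m-4$ (\refe{l0small}), while $[2m,p/2)_p$ is itself disjoint from $(-p/2,-2m]_p$. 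Finally, the bound $|S_1+S_2|\ge|S_1|+|S_2|-1$ recorded just before the claim gives $|A_2^\pm+A_2^\pm|\ge 2|A_2^\pm|-1$ whenever $A_2^\pm\neq\est$.

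Assembling these facts, the four sets $A$, $A_0-A_0$, $A_2^++A_2^+$, $A_2^-+A_2^-$ are pairwise disjoint, so the sum of their cardinalities is at most $p$. A short case analysis on which of $A_2^+,A_2^-$ is empty shows $|A_2^++A_2^+|+|A_2^-+A_2^-|\ge 2|A_2|-2$ in every case (both empty: $0\ge-2$; exactly one empty: $2|A_2|-1\ge 2|A_2|-2$; both non-empty: $(2|A_2^+|-1)+(2|A_2^-|-1)=2|A_2|-2$), whence $n+(\ell+m)+(2|A_2|-2)\le p$, which is precisely the assertion. I do not anticipate a genuine obstacle; the only points demanding care are the disjointness of $[2m,p-2m]_p$ from $[-\ell,\ell]_p$ --- exactly where the bound $\ell\le 2m-4$ is used --- and the slightly fiddly bookkeeping when one of $A_2^+,A_2^-$ is empty, together with the degenerate interval conventions when $A_2=\est$.
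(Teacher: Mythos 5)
Your proposal is correct and is essentially the paper's own argument: the paper likewise deduces the bound from the pairwise disjointness of $A$, $A_0-A_0\seq[-\ell,\ell]_p$, $A_2^++A_2^+\seq[2m,p/2)_p$, and $A_2^-+A_2^-\seq(p/2,p-2m]_p$, together with \refe{A0-A0} and $|A_2^\pm+A_2^\pm|\ge 2|A_2^\pm|-1$. Your treatment of the empty-set edge cases and the explicit verification of interval disjointness are details the paper leaves implicit, but the route is the same.
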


\begin{proof}
The assertion follows from the fact that the sets $A$ and
  $$ A_2^++A_2^+ \seq [2m,p/2)_p\,,
                \quad A_2^-+A_2^- \seq (p/2,p-2m]_p\,,
                                            \quad A_0-A_0\seq[-\ell,\ell] $$
are pairwise disjoint, the estimate \refe{A0-A0}, and the observation that
$|A_2^++A_2^+|\ge 2|A_2^+|-1$ and $|A_2^-+A_2^-|\ge 2|A_2^-|-1$.
\end{proof}

Write $I:=[-(2m-\ell-2),2m-\ell-2]_p$ and
 $J:=2\ast A_1^++I$. We observe that
\begin{equation}\label{e:caseI2}
  J \cap A = (-J)\cap A = \est
\end{equation}
by Lemma \refl{EZ} (which is applicable since
$A_1^+\seq[m/2,\ell-m+1]_p\seq[\ell/4,\ell/2]_p$, as it follows from
\refe{l0small}). Let $k:=|A_1^+|$, write $A_1^+=\{a_1\longc a_k\}$, where the
elements are so numbered that their inverse images in $[m/2,\ell-m+1]$ under
$\phi_p$ form an increasing sequence, and for $i\in[1,k]$ set
$S_i:=2\ast\{a_1\longc a_i\}+I$. We have then $S_1=|I|=4m-2\ell-3$ and
$|S_{i+1}\stm S_i|\ge 2$ for $i\in[1,k-1]$, and it follows that
\begin{equation}\label{e:Slarge}
   |J| = |S_k| \ge (4m-2\ell-3) + 2(k-1) = 2|A_1^+|+4m-2\ell-5.
\end{equation}

We are now in a position to complete the proof showing that the above-made
assumptions (see the remark following \refe{l032n0}) lead to a contradiction.
We consider separately two cases: $m<0.244p$ and $m>0.244p$.

\subsection*{Case I: $m<0.244p$}
We revisit the proof of Claim \refc{n02excluded}, defining $\mu$ by \refe{mu}
and observing that \refe{muest} gives
\begin{equation}\label{e:arccos}
  \mu > \frac1\pi\, \arccos
                \Big( 1.049 \sin 2\pi\Big( \frac{2m}p-0.318 \Big) \Big) p.
\end{equation}
(This estimate is stronger, than $\mu\ge m/2$, for small values of $m$, and
in particular for $m<0.244p$.)

Since $A_1^+\seq[\mu,\ell-m+1]_p$, we have
  $$ J \seq [\ell-2m+2\mu+2,\ell]_p, $$
hence \refe{+>-}, \refe{caseI2} and \refe{Slarge} yield
\begin{align*}
  |[\ell-2m+2\mu+2,\ell]_p\stm A| &\ge 2|A_1^+|+4m-2\ell-5 \\
\intertext{and}
  |[-\ell,-(\ell-2m+2\mu+2)]_p\stm A| &\ge 2|A_1^+|+4m-2\ell-5 \\
                                      &\ge 2|A_1^-|+4m-2\ell-5.
\end{align*}

Adding up these estimates we obtain
  $$ |[\ell-2m+2\mu+2, p-(\ell-2m+2\mu+2)]_p \stm A|
                                               \ge 2|A_1|+8m-4\ell-10, $$
and it follows that
\begin{equation}\label{e:caseI5}
  |[\ell-2m+2\mu+2, p-(\ell-2m+2\mu+2)]_p \cap A|
                                           \le p-2|A_1|+2\ell-4m-4\mu+7.
\end{equation}
We notice now that
\begin{multline*}
  \ell - 2m + 2\mu + 1 \le \ell - m + 1 \le p - n - 2m + 1 \\
                                  < (1-0.318-2\cdot 0.238)p + 1 < 0.25p
\end{multline*}
by \refe{l0smallprime} and \refe{n0large}, and consequently
  $$ |[-(\ell-2m+2\mu+1),\ell-2m+2\mu+1]_p\cap A| \le |A_1|+|A_2| $$
holds. Taking the sum of the last inequality and \refe{caseI5}, observing
that $|A_1|+|A_2|\ge n-m$ by \refe{inhalf}, and using the estimate of Claim
\refc{l0verysmall}, we get
\begin{align*}
  n &\le p - |A_1| + |A_2| + 2\ell - 4m - 4\mu + 7 \\
    &\le 3p - |A_1| - |A_2| - 2n -6m - 4\mu + 11 \\
    & \le 3p - 3n - 5m -4\mu + 11.
\end{align*}
This yields
  $$ \frac{4\mu}{p} \le 3 - \frac{4n}p - \frac{5m}p + \frac{11}p
                                               < 1.729-\frac{5m}p, $$
and comparing this with \refe{arccos} we obtain
  $$ \frac4\pi\, \arccos \Big( 1.049
             \sin 2\pi\Big( \frac{2m}p-0.318 \Big) \Big)
                                               < 1.729-\frac{5m}p. $$
However, a routine investigation shows that
  $$ \frac4\pi\, \arccos\big( 1.049\sin 2\pi (2x-0.318) \big) - 1.729 + 5x $$
is positive for $x\in(0.238,0.244)$.

\subsection*{Case II: $m>0.244p$}
Recalling \refe{+>-} we get
\begin{equation}\label{e:A1large}
  |A_1^+| \ge \frac12\,{|A_1|}
               = \frac12\, \big( |(-p/4,p/4)_p\cap A| - |A_2| \big)
                                            \ge \frac12\, ( n-m - |A_2|)
\end{equation}
by \refe{inhalf} and the definitions of $A_1$ and $A_2$. We also observe that
\begin{align}
  |A_1^+| &\le (\ell-m+1) - m/2 + 1 \nonumber \\
          &\le p-n-\frac52\,m + 2 \nonumber \\
          &=   (n-m) - \Big( 2n+\frac32\,m-p \Big) + 2 \nonumber \\
          &<   (n-m) - (2\cdot0.318+1.5\cdot0.244-1)p + 2 \nonumber \\
          &<   n-m \label{e:A1small}
\end{align}
by \refe{l0smallprime}.

Using Claim \refc{l0verysmall} we derive from \refe{A1large} that
\begin{align*}
  |[m/2,\ell-m+1]_p\stm A_1^+|
     &\le \Big( \ell-\frac32\,m+2 \Big) - \frac12\,(n - m - |A_2|) \\
     &=   \ell - m - \frac12\,n + \frac12\,|A_2| + 2 \\
     &=   2(2m-\ell-2) + 3\ell - 5m - \frac12\,n + \frac12\,|A_2| + 6 \\
     &\le 2(2m-\ell-2) + 3p - 8m - \frac72\,n + 12 \\
     &<   2(2m-\ell-2) - (8\cdot 0.238 + 3.5\cdot 0.318 - 3)p + 12 \\
     &<   2(2m-\ell-2),
\end{align*}
and it follows that the set $J$ is an interval in $\Zp$. Consequently, by
\refe{inhalf}, Claim \refc{n02excluded}, the definition of $A_1^+$,
\refe{A1A2}, and \refe{caseI2} we have
\begin{align}
  n-m &\le |(-m/2,(p-m)/2)_p\cap A| \nonumber \\
        &=   |A_1^+| + |[m,(p-m)/2)_p\cap A| \nonumber \\
        &\le |A_1^+| + |[m,(p-m)/2)_p\stm J|. \label{e:stmJ}
\end{align}
Since $J\seq[\ell-m+2,\ell]_p$ (as it is immediate from the definitions of
$J$ and $A_1^+$), we have
  $$ |[m,(p-m)/2)_p\stm J| \le \max \{ |[m,(p-m)/2)_p\stm J_1|,
                                       \: |[m,(p-m)/2)_p\stm J_2|\}, $$
where
  $$ J_1 := [\ell-m+2,\ell-m+|J|+1]_p
                          \quad \text{and}\quad J_2 := [\ell+1-|J|,\ell]_p $$
(so that $J_1$ and $J_2$ are subintervals of the interval $[\ell-m+2,\ell]_p$
with $|J_1|=|J_2|=|J|$, adjacent to the endpoints of this interval).
Accordingly, from \refe{stmJ} we deduce that
\begin{equation}\label{e:J12}
  |[m,(p-m)/2)_p\stm J_i| \ge n-m-|A_1^+|
\end{equation}
holds true with either $i=1$, or $i=2$.

By \refe{A1small}, assuming that \refe{J12} holds with $i=1$ we obtain
  $$ n-m-|A_1^+| < (p-m)/2 - (\ell-m+|J|+1) + 1.  $$
Using \refe{Slarge}, \refe{A1large}, and Claim \refc{l0verysmall} we now get
\begin{gather*}
  |A_1^+| + 4m - 2\ell - 5 \le |J|-|A_1^+|
                                    < 0.5p + \frac32\,m - n - \ell, \\
  |A_1^+| < 0.5p - \frac52\,m - n + \ell + 5
                                 \le 1.5p - \frac72\,m - 2n -2|A_2| + 7, \\
  \frac52\,n + 3m < 1.5p + 7,
\end{gather*}
which is wrong since
  $$ \frac52\,n + 3m > (2.5\cdot0.318+3\cdot0.238)p = 1.509p. $$

Assume now that \refe{J12} holds with $i=2$. Since
  $$ \frac12\,(n-m-|A_2|) \le |A_1^+| \le \ell-\frac32\,m + 2 $$
by \refe{A1large} and the definition of $A_1^+$, we have
  $$ \ell \ge \frac12\,n + m - \frac12\,|A_2| - 2
       = \frac12\,(p-m) + \frac12\,(n + 3m - p - |A_2|) - 2
                                                     > \frac12\,(p-m), $$
as $|A_2|\le\max\{0,0.5p-2m+2\}$ and
\begin{gather*}
  n + 3m > (0.318+3\cdot 0.238)p = 1.032 p, \\
  n + 5m > (0.318+5\cdot0.238)p = 1.508 p.
\end{gather*}
Similarly to the above we now obtain from \refe{J12} and \refe{A1small}
  $$ n - m - |A_1^+| \le (\ell + 1 - |J|) - m + 1 $$
and using \refe{Slarge}, \refe{A1large}, and Claim \refc{l0verysmall} derive
that
\begin{gather*}
  |A_1^+| + 4m - 2\ell - 5 \le |J|-|A_1^+| \le \ell - n + 2, \\
  |A_1^+| \le 3\ell - 4m - n + 7 \le 3p - 7m - 4n -6|A_2| + 13, \\
  3p + 13 \ge \frac92\,n + \frac{13}2\,m
                             > (4.5\cdot 0.318 + 6.5\cdot0.244)p = 3.017p,
\end{gather*}
a contradiction, as wanted.

\section*{Acknowledgement}

The authors are grateful to Dr. K. Srinivas for helpful discussions.

\bigskip

\end{document}